\documentclass[12pt,leqno,draft]{article}
\usepackage{amsfonts}
\pagestyle{plain}
\usepackage{amsmath, amsthm, amsfonts, amssymb, color}
\usepackage{mathrsfs}
\usepackage{color}
\setlength{\topmargin}{0cm} \setlength{\oddsidemargin}{0cm}
\setlength{\evensidemargin}{0cm} \setlength{\textwidth}{16truecm}
\setlength{\textheight}{21truecm}

\newtheorem{thm}{Theorem}[section]

\newtheorem{lem}[thm]{Lemma}

\newtheorem{exa}[thm]{Example}

\theoremstyle{definition}
\newtheorem{defn}{Definition}[section]
\newcommand{\scr}[1]{\mathscr #1}
\definecolor{wco}{rgb}{0.5,0.2,0.3}

\numberwithin{equation}{section} \theoremstyle{remark}

\newcommand{\ua}{\uparrow}

\title{{\bf 
Log-Harnack Inequality and Bismut Formula for McKean-Vlasov SDEs with Singularities in all Variables}\footnote{Supported in
 part by  National Key R\&D Program of China (No. 2022YFA1006000, 2020YFA0712900) and NNSFC (12271398).} }
\author{
{\bf   Xing Huang $^{a)}$,  Feng-Yu Wang $^{a)}$  }\\
\footnotesize{ a)Center for Applied Mathematics, Tianjin
University, Tianjin 300072, China}\\
\footnotesize{  xinghuang@tju.edu.cn, wangfy@tju.edu.cn}\\
}
\begin{document}
\allowdisplaybreaks
\def\R{\mathbb R}  \def\ff{\frac} \def\ss{\sqrt} \def\B{\mathbf
B} \def\W{\mathbb W}
\def\N{\mathbb N} \def\kk{\kappa} \def\m{{\bf m}}
\def\ee{\varepsilon}\def\ddd{D^*}
\def\dd{\delta} \def\DD{\Delta} \def\vv{\varepsilon} \def\rr{\rho}
\def\<{\langle} \def\>{\rangle} \def\GG{\Gamma} \def\gg{\gamma}
  \def\nn{\nabla} \def\pp{\partial} \def\E{\mathbb E}
\def\d{\text{\rm{d}}} \def\bb{\beta} \def\aa{\alpha} \def\D{\scr D}
  \def\si{\sigma} \def\ess{\text{\rm{ess}}}
\def\beg{\begin} \def\beq{\begin{equation}}  \def\F{\scr F}
\def\Ric{\text{\rm{Ric}}} \def\Hess{\text{\rm{Hess}}}
\def\e{\text{\rm{e}}} \def\ua{\underline a} \def\OO{\Omega}  \def\oo{\omega}
 \def\tt{\tilde} \def\Ric{\text{\rm{Ric}}}
\def\cut{\text{\rm{cut}}} \def\P{\mathbb P} \def\ifn{I_n(f^{\bigotimes n})}
\def\C{\scr C}      \def\aaa{\mathbf{r}}     \def\r{r}
\def\gap{\text{\rm{gap}}} \def\prr{\pi_{{\bf m},\varrho}}  \def\r{\mathbf r}
\def\Z{\mathbb Z} \def\vrr{\varrho} \def\ll{\lambda}
\def\L{\scr L}\def\Tt{\tt} \def\TT{\tt}\def\II{\mathbb I}
\def\i{{\rm in}}\def\Sect{{\rm Sect}}  \def\H{\mathbb H}
\def\M{\scr M}\def\Q{\mathbb Q} \def\texto{\text{o}} \def\LL{\Lambda}
\def\Rank{{\rm Rank}} \def\B{\scr B} \def\i{{\rm i}} \def\HR{\hat{\R}^d}
\def\to{\rightarrow}\def\l{\ell}\def\iint{\int}
\def\EE{\scr E}\def\Cut{{\rm Cut}}
\def\A{\scr A} \def\Lip{{\rm Lip}}
\def\BB{\scr B}\def\Ent{{\rm Ent}}\def\L{\scr L}
\def\R{\mathbb R}  \def\ff{\frac} \def\ss{\sqrt} \def\B{\mathbf
B}
\def\N{\mathbb N} \def\kk{\kappa} \def\m{{\bf m}}
\def\dd{\delta} \def\DD{\Delta} \def\vv{\varepsilon} \def\rr{\rho}
\def\<{\langle} \def\>{\rangle} \def\GG{\Gamma} \def\gg{\gamma}
  \def\nn{\nabla} \def\pp{\partial} \def\E{\mathbb E}
\def\d{\text{\rm{d}}} \def\bb{\beta} \def\aa{\alpha} \def\D{\scr D}
  \def\si{\sigma} \def\ess{\text{\rm{ess}}}
\def\beg{\begin} \def\beq{\begin{equation}}  \def\F{\scr F}
\def\Ric{\text{\rm{Ric}}} \def\Hess{\text{\rm{Hess}}}
\def\e{\text{\rm{e}}} \def\ua{\underline a} \def\OO{\Omega}  \def\oo{\omega}
 \def\tt{\tilde} \def\Ric{\text{\rm{Ric}}}
\def\cut{\text{\rm{cut}}} \def\P{\mathbb P} \def\ifn{I_n(f^{\bigotimes n})}
\def\C{\scr C}      \def\aaa{\mathbf{r}}     \def\r{r}
\def\gap{\text{\rm{gap}}} \def\prr{\pi_{{\bf m},\varrho}}  \def\r{\mathbf r}
\def\Z{\mathbb Z} \def\vrr{\varrho} \def\ll{\lambda}
\def\L{\scr L}\def\Tt{\tt} \def\TT{\tt}\def\II{\mathbb I}
\def\i{{\rm in}}\def\Sect{{\rm Sect}}  \def\H{\mathbb H}
\def\M{\scr M}\def\Q{\mathbb Q} \def\texto{\text{o}} \def\LL{\Lambda}
\def\Rank{{\rm Rank}} \def\B{\scr B} \def\i{{\rm i}} \def\HR{\hat{\R}^d}
\def\to{\rightarrow}\def\l{\ell}\def\BB{\mathbb B}
\def\8{\infty}\def\I{1}\def\U{\scr U} \def\n{{\mathbf n}}\def\v{V}
\maketitle

\begin{abstract} The log-Harnack inequality and Bismut formula are established for McKean-Vlasov SDEs with singularities in all (time, space, distribution) variables,
where the drift satisfies an integrability condition in time-space, and the continuity in distribution may be weaker than Dini.
  The main results considerably improve  the  existing   ones   for  the  case where the drift is $L$-differentiable and  Lipschitz continuous in distribution with respect to the 2-Wasserstein distance. \end{abstract} \noindent
 AMS subject Classification:\  60H10, 60B05.   \\
\noindent
 Keywords: McKean-Vlasov SDEs,   Log-Harnack inequality,   Bismut formula, Dini function, Wasserstein distance.
 \vskip 2cm

\section{Introduction}

Let $\scr P$ be the set of all probability measures on $\R^d$ equipped with the weak topology, and let $W_t$ be an $m$-dimensional Brownian motion on a complete filtration probability space $(\OO,\{\F_t\}_{t\ge 0},\F,\P)$. Consider
the following McKean-Vlasov SDE on $\R^d$:
\beq\label{E0} \d X_t= b_t(X_t, \L_{X_t})\d t+  \si_t(X_t)\d W_t,\ \ t\in [0,T],\end{equation}
where  $T>0$ is a fixed time, $\L_{X_t}$ is the distribution of $X_t$, and
 $$b: [0,T]\times\R^d\times\tilde{\scr P}\to\R^d,\ \ \si: [0,T]\times\R^d\to \R^d\otimes\R^m$$
are measurable for some non-empty subspace  $\tilde{\scr P}\subset \scr P$ equipped with a complete    distance $\tilde{\rho}$. Because of its wide applications,    this type SDE has been intensively investigated, see for instance \cite{BB, BBP, CD,CK, L, MV, SZ} and the survey \cite{HRW}.

In this paper, we study the regularity of \eqref{E0}  for distributions in
$$\scr P_k:=\big\{\mu\in \scr P: \|\mu\|_k:=\mu(|\cdot|^k)^{\ff 1 k}<\infty\big\},\ \ k\in (1,\infty).$$
Note that $\scr P_k$    is a Polish space under the   Wasserstein distance
$$\W_k(\mu,\nu)= \inf_{\pi\in \C(\mu,\nu)} \bigg(\int_{\R^d\times\R^d} |x-y|^k \pi(\d x,\d y)\bigg)^{\ff 1 {k}},\ \  $$ where $\C(\mu,\nu)$ is the set of all couplings of $\mu$ and $\nu$.
The SDE \eqref{E0} is called well-posed for distributions in $\scr P_k$, if for any initial value $X_0$ with $\L_{X_0}\in \scr P_k$ (respectively, any initial distribution $\gg\in \scr P_k$), it has a unique solution (respectively, a unique weak solution)
$X=(X_t)_{t\in [0,T]}$ such that $\L_{X_\cdot}:=(\L_{X_t})_{t\in [0,T]}\in C([0,T];\scr P_k).$ In this case,  for any $\gg\in \scr P_k$, let
  $P_t^*\gg=\L_{X_t^\gg}$ for the solution $X_t^\gg$ with $\L_{X_0^\gg}=\gg$.  We study the regularity of the map
$$\scr P_k\ni \gg\mapsto P_tf(\gg):= \E[f(X_t^\gg)]=\int_{\R^d} f\d\{P_t^*\gg\}$$ for $ t\in (0,T]$ and $f\in\B_b(\R^d), $
  where $\B_b(\R^d)$ is the space of bounded measurable functions on $\R^d$.

As powerful tools characterizing the regularity in distribution for stochastic systems, the dimension-free Harnack inequality due to \cite{W97},  the log-Harnack inequality  introduced in  \cite{W10},  and the Bismut (also called Bismut-Elworthy-Li)   formula  developed from \cite{Bismut,EL}, have been intensively investigated. See for instance the monograph  \cite{Wbook} for an account of related study on SPDEs.

In recent years, the log-Harnack inequality and Bismut type formula have also been established for McKean-Vlasov SDEs with coefficients regular in the distribution variable. Below we present a brief summary.

Write $b_t(x,\mu)= b_t^{(0)}(x)+ b_t^{(1)}(x,\mu)$. According to \cite{FYW2}, if $b^{(0)}$ satisfies some integrability condition on $(t,x)$, and  there exists a constant $K_b\geq 0$
such that
$$|b_t^{(1)}(x,\mu)-b_t^{(1)}(y,\nu)|\leq K_b(|x-y|+\W_2(\mu,\nu)),\ \ (x,\mu), (y,\nu)\in\R^d\times \scr P_2,t\in[0,T],$$
then there exists a constant $c>0$ such that  the log-Harnack inequality
$$P_t \log f(\tt \gg)\le \log P_t f(\gg)+ \ff{c}{t} \W_2(\gg,\tt\gg)^2,\ \ t\in (0,T], \ f\in \B_b^+(\R^d), \ \gg,\tt\gg\in \scr P_2$$
holds,   where   $\B_b^+(\R^d)$ is the space of positive elements in $\B_b(\R^d)$. This inequality is equivalent to the entropy-cost inequality
$$\Ent(P_t^*\gg|P_t^*\tt\gg)\le \ff{c}t \W_2(\gg,\tt\gg)^2,\ \ t\in (0,T], \gg,\tt\gg\in \scr P_2,$$ where $\Ent$ is the relative entropy, i.e. for any $\mu,\nu\in \scr P$,
$\Ent(\nu|\mu):=\infty$ if $\nu$ is not absolutely continuous with respect to $\mu$, while
$$\Ent(\nu|\mu):=\mu(\rr\log\rr)=\int_{\R^d}(\rr\log\rr)\d\mu,\ \ \text{if}\ \rr:= \ff{\d\nu}{\d\mu}\ \text{exists. }$$ See also
  \cite{HW, RW21,FYW1} for  log-Harnack inequalities with more regular $b^{(0)}$, and see \cite{Ren} for the dimension-free   Harnack inequality with power.

If furthermore   $b_t^{(1)}(x,\mu)$ is $L$-differentiable in $\mu\in \scr P_k$, the following Bismut type formula has been established  in \cite{FYW3} for the intrinsic derivative
 $D^I_\phi$ (see Definition \ref{ind} below):
$$D_\phi^I P_tf(\mu)= \E\big[f(X_t^\mu) M_t^{\mu,\phi}\big],\ \ t\in (0,T], f\in \B_b(\R^d), \mu\in \scr P_k, \phi\in L^k(\R^d\to\R^d;\mu),$$ where
 $M_t^{\mu,\phi}$ is an explicit martingale. See \cite{BRW20, BBP,HSW, RW} for earlier results  with  more regular  $b^{(0)}$.
See \cite{B}  for the case where  $\mu=\delta_x$ is  the Dirac measure at $x\in\R^d$, and see   \cite{Song,T} for a less   explicit Bismut formula involving in the inverse of the Malliavin matrix of the solution.

 We emphasize
  that  existing results   on log-Harnack inequality and Bismut formula for McKean-Vlasov SDEs only
  apply to the case with coefficients  regular in the distribution variable, i.e. either  $\W_2$-Lipschitz continuous or  $L$-differentiable. The reason is that   the  Zvonkin transform technique \cite{AZ}  used in these references only kills singularities in the time-spatial variables  $(t,x)$, but not the distribution   variable.

On the other hand, a derivative estimate has been presented in \cite{CR} for the heat kernel when the drift is of type $b_t(x,\mu(V))$, where $V$ is  a H\"{o}lder continuous function, and  $\mu(V):= \int_{\R^d} V\d\mu$. In this case, the drift is only Lipschitz continuous in distribution with respect to
 $$\W_\vv(\mu,\nu):=\sup\big\{ \big|\mu(f)-\nu(f)\big|: |f(x)-f(y)| \le |x-y|^\vv\big\} $$
for some $\vv\in(0,1)$ rather than     $\W_1$,   and hence also has certain singularity in the distribution variable.  This  result encourages us  to establish the log-Harnack inequality and Bismut formula for McKean-Vlasov SDEs
with coefficients singular in all time-spatial-distribution variables.

Indeed, we will establish the log-Harnack inequality and Bismut formula  for McKean-Vlasov SDEs with  stronger singularity  in the  distribution variable:  the drift is only Lipschitz continuous   with respect to
\begin{align*} \W_\alpha(\mu,\nu):= \sup\big\{ \big|\mu(f)-\nu(f)\big|: |f(x)-f(y)| \le \aa( |x-y|) \big\},\end{align*}
where $\aa$ is the square root of a Dini function, i.e. it belongs  to  class
\beg{align*}   \scr A:= \bigg\{\aa: &\ [0,\infty)\to  [0,\infty)\  \text{is\ increasing\ and\ concave,\ } \\
&\aa(0)=0,\ \aa(r)>0\ \text{for}\ r>0,\  \int_0^1 \ff{\aa(r)^2}{r}\d r\in (0,\infty)\bigg\}.\end{align*}
Noting that  $\int_0^1 \ff{\aa(r)^2}{r}\d r<\infty$ is the Dini condition for $\aa^2$,
   the continuity  in the distribution variable is even weaker than  Dini,  so that the existing study in the literature is considerably improved.

The log-Harnack inequality is established in Section 2, where a key step is to  derive the estimate   (Lemma  \ref{DLP} for $k=2$):
$$ \W_\alpha(P_t^\ast\gg,P_t^\ast\tt\gg)\le    c  \W_2(\tt\gg,\gg)\ff{ \aa(t^{\ff 1 2}) }{\ss t},\ \ \gg,\tt\gg\in \scr P_2, t\in(0,T]  $$  for some constant $c>0$.

The Bismut formula for the intrinsic derivative of $P_tf$ is presented in Section 3, for which  we develop new techniques  to control  the intrinsic derivative $D^I$ and the extrinsic derivative $D^E$ of the drift term in the distribution variable (Theorem \ref{TA2'}(1)):
$$  \|D^I P_t [D^E b_t(y,\nu)(\cdot)](\mu)\|_{L^{\frac{k}{k-1}}(\mu)} \le \ff{c\,\aa(t^{\ff 1 2})}{\ss t},\ \ t\in (0,T],\mu\in\scr P_k, y\in\R^d, \nu\in\scr P_k.$$

\section{Log-Harnack Inequality}

Since $\W_2$ is involved in the log-Harnack inequality, in this section we  mainly  consider \eqref{E0} for  $(\tt{\scr P},\tt\rr)=(\scr P_2,\W_2)$, but the drift may be not Lipschitz continuous in $\W_k$ for any $k> 0$.
We first state the concrete assumption and the main result on the log-Harnack inequality, then present a complete proof in a separate subsection.

\subsection{Assumption and main result}

 We will allow $b_t(x,\cdot)$ to be merely Lipschitz continuous in the sum of $\W_2$ and the Wasserstein distance induced by the square root of a Dini function.

Let $\aa\in\scr A$. Then it holds
\beq\label{AA}  \aa(s+t)\le \aa(s)+\aa(t),\ \ \aa(rt)\le r\aa(t),\ \ \\ \ s,t> 0, r\ge 1.\end{equation}
These inequalities   follow  from $\aa(0)=0$ and the decreasing monotonicity of $\aa'$ such that
$$\aa'(s+t)\le \aa'(s),\ \ \ff{\d}{\d t} \aa(rt)= r\aa'(rt)\le  r\aa'(t),\ \ s,t\ge 0, r\ge 1.$$
The second estimate in \eqref{AA} with $r=t^{-1}$ yields
\begin{align}\label{AA2} \aa(t)\ge \aa(1)t>0,\ \ \ t\in (0,1].\end{align}

 To measure the   singularity   in $(t,x)\in [0,T]\times \R^d$, we recall locally integrable functional spaces presented in \cite{XXZZ}.
For any $t>s\ge 0$ and $p,q\in (1,\infty]$, we write $f\in  \tilde{L}_p^q([s,t])$  if $f: [s,t]\times\R^d\to \R$ is measurable with
 $$\|f\|_{\tilde{L}_p^q([s,t])}:= \sup_{y\in\R^d}\bigg\{\int_{s}^t \bigg(\int_{B(y,1)}|f(r,x)|^p\d x\bigg)^{\ff q p} \d r\bigg\}^{\ff 1q}<\infty,$$ where $B(y,1):=\{x\in \R^d: |x-y|\le 1\}$ is the unit ball centered at the point $y$.
 When $s=0$, we simply denote
\begin{align*} \tt L_p^q(t)=\tt L_p^q([0,t]),\ \ \|f\|_{\tilde{L}_p^q(t)}=\|f\|_{\tilde{L}_p^q([0,t])}.\end{align*}
 We   take $(p,q)$ from the   space
 \begin{equation*} \scr K:=\Big\{(p,q)\in(2,\infty]^2:\  \ \ff{d}p+\ff 2 q<1\Big\},\end{equation*}
and make the following assumption where $\nn$ is the gradient in $x\in\R^d$.

\begin{enumerate}
\item[{\bf(A)}]    Let  $(\tilde{\scr P},\tilde{\rho})=(\scr P_k,\W_k)$ for some $k\in(1,\infty)$.    There exist     $K\in (0,\infty),  l\in \mathbb N, \aa\in \scr A$   and
$$1\le f_i\in \tt L_{p_i}^{q_i}(T),\ \ (p_i,q_i)\in    \scr K,\ \ 0\le i\le l$$   such that the following conditions hold.
\item[$(A_1)$] $(\si_t\si^*_t)(x)$ is invertible and $\sigma_t(x)$ is weakly differentiable in $x$ such that
 $$\|\si\si^*\|_\infty+\|(\si\si^*)^{-1}\|_\infty<\infty,\ \ |\nabla\sigma|\le \sum_{i=1}^l f_i,$$
$$\lim_{\vv\downarrow 0} \sup_{t\in [0,T], |x-x'|\le \vv} \|(\sigma_t \sigma_t^{\ast})(x)-(\sigma_t \sigma_t^{\ast})(x')\|= 0.$$
\item[$(A_2)$]   $b_t(x,\mu)=b_t^{(0)}(x)+b_t^{(1)}(x,\mu),$  where for any $ t\in[0,T], x,y\in\R^d, \mu,\nu\in \scr P_k,$
\beg{align*} &|b^{(0)}_t(x)|\le f_0(t,x),\ \   |b_t^{(1)}(0,\delta_0)|\le K,\\
&|b_t^{(1)}(x,\mu)-b_t^{(1)}(y,\nu)|\leq K\big\{|x-y|+ \W_\aa(\mu,\nu)+\W_k(\mu,\nu)\big\}.\end{align*}
\end{enumerate}

We first observe   that  {\bf (A)} implies the well-posedness of \eqref{E0} for distributions in $\scr P_k$. Let $[\cdot]_\aa$ be the $\aa$-continuity modulus defined by
$$[f]_\aa:=\sup_{x\ne y} \ff{|f(x)-f(y)|}{\aa(|x-y|)}.$$
Since $\aa(0)=0$ and $\aa$ is concave, there exists a constant $c>0$ such that
\begin{equation*}\sup_{[f]_\aa\le 1} |f(x)-f(0)|\le \aa(|x|)\leq \alpha(1)(1+|x|)\le c +c|x|^k,\ \ x\in\R^d.\end{equation*}  Thus,
\begin{align} \label{alw}\ff 1 {c} \W_\aa(\mu,\nu)\le \W_{k,var}(\mu,\nu):=\sup_{|f|\le 1+|\cdot|^k} |\mu(f)-\nu(f)|.
\end{align}
So, by \cite[Theorem 3.1(1)]{FYW2} for $D=\R^d$,  under assumption {\bf (A)}, \eqref{E0} is well-posed for distributions in $\scr P_k$, and for any $n\ge 1$ there exists a constant $c_n>0$ such that
\beq\label{ES00} \E\Big[\sup_{t\in [0,T]} |X_t|^n\Big|\F_0\Big]\le c_n (1+|X_0|^n).\end{equation}
Consequently,
 \beq\label{ES2} \sup_{t\in[0,T]}\|P_t^*\gg\|_{k}^k = \sup_{t\in[0,T]}  (P_t^*\gg)(|\cdot|^k) \leq\E\Big[\sup_{t\in[0,T]}|X_t^\gg|^k\Big]\le c_k (1+\|\gg\|_k^k).\end{equation}

\begin{thm}\label{LHI} Assume {\bf (A)} with $k=2$.
Then there exists a constant $c>0$ such that
 \beg{align}\label{shlog}   \Ent(P_t^\ast\gamma |P_t^\ast\tilde{\gamma})
 \leq  \ff c {t}\,\W_2(\gamma,\tilde{\gamma})^2,\ \ t\in (0,T], \gamma,\tilde{\gamma}\in \scr P_{2}.\end{align}
\end{thm}
 \begin{exa} Let $h:\R^d\times\R^d\to\R^d$ satisfy
$$|h(x_1,y_1)-h(x_2,y_2)|\leq K_h|x_1-x_2|+\alpha(|y_1-y_2|),\ \ x_1,x_2,y_1,y_2\in\R^d$$
for some $\alpha\in\scr A$ and $K_h\geq 0$. Then $b_t^{(1)}(x,\mu)=\int_{\R^d}h(x,y)\mu(\d y)$ satisfies $(A_2)$.
\end{exa}
\subsection{Proof of Theorem \ref{LHI}}

Although in Theorem \ref{LHI} we assume  {\bf (A)} for $k=2$, for later use we will also consider general $k\in (1,\infty)$.
For any $\gg\in \scr P_k$, consider  the   decoupled SDE of \eqref{E0}:
\begin{align}\label{decop}\d X^{x,\gamma}_t= b_t(X^{x,\gamma}_t, P_t^*\gg)\d t +\si_t(X^{x,\gamma}_t)\d W_t,\ \ X^{x,\gamma}_0=x.
\end{align}
By \cite[Theorem 3.1(1)]{FYW2} for $D=\R^d$, this SDE is well-posed and  \eqref{ES00} also holds for $X_t^{x,\gg}$ in place of $X_t$, i.e. for any $n\ge 1$ there exists a  constant $c_n(\gg)>0$ such that
\beq\label{ES00'} \E\Big[\sup_{t\in [0,T]} |X_t^{x,\gg}|^n]\le c_n (\gg)(1+|x|^n),\ \ x\in\R^d.\end{equation}
Let $P_t^\gg$ be the associated Markov semigroup, i.e.
$$P_t^\gg f(x):=\E[f(X_t^{x,\gg})],\ \ t\in [0,T], x\in \R^d, f\in \B_b(\R^d).$$
We  first present the following generalized H\"older inequality with a concave function $\aa$.

\beg{lem}\label{LN} Let $\aa: [0,\infty)\to [0,\infty)$ be concave. Then for any non-negative random variables $\xi$ and $\eta$,
\begin{align}\label{ale}\E[\aa(\xi)\eta]\le \|\eta\|_{L^p(\P)} \aa\Big(\|\xi\|_{L^{\ff{p}{p-1}} (\P)}\Big),\ \ p> 1.\end{align}
Consequently, for any random variable $\bar{\xi}$ on $\R^d$,     $f\in C(\R^d;\BB)$ for a Banach space $(\BB,\|\cdot\|_\BB)$ with $[f]_\aa<\infty$, and any real random variable $ \bar{\eta}$ with $\E[ \bar{\eta}]=0$,
\begin{align}\label{aas}\big\|\E[f(\bar{\xi})\bar{\eta}]\big\|_\BB\le [f]_\aa  \|\bar{\eta}\|_{L^p(\P)} \aa\Big(\|\bar{\xi}-z\|_{L^{\ff{p}{p-1}} (\P)}\Big),\ \ p> 1, z\in\R^d.
\end{align}
\end{lem}

\beg{proof}  Since the assertion holds trivially for $p=\infty$, we only prove   for $p<\infty$.  It suffices to prove for $\E[\eta^p] \in (0,\infty)$. Let $\Q:=\ff{ \eta}{\E[\eta]}\P.$ By Jensen's and H\"older's inequalities, and  using the second inequality in \eqref{AA}, we obtain
\beg{align*} &\E[\aa(\xi)\eta]= \E[\eta] \E_\Q [\aa(\xi)]\le  \E[\eta]   \aa(\E_\Q[\xi])\le  \E[\eta]   \aa\bigg(\ff{(\E[\eta^p])^{\ff 1 p}}{\E[\eta]}\big(\E[\xi^{\ff{p}{p-1}}]\big)^{\ff{p-1}p} \bigg)\\
&\le   \E[\eta]   \bigg\{ \ff{(\E[\eta^p])^{\ff 1 p}}{\E[\eta]}\ \aa\Big(\big(\E[\xi^{\ff{p}{p-1}}]\big)^{\ff{p-1}p} \Big)\bigg\}= \big(\E[\eta^p]\big)^{\ff 1 p}  \aa\Big(\big(\E[\xi^{\ff{p}{p-1}}]\big)^{\ff{p-1}p}\Big).\end{align*}
Then the  second inequality follows by noting that $\E[\bar{\eta}]=0$ implies
$$\big\|\E[f(\bar{\xi})\bar{\eta}]\big\|_\BB=\big\| \E[\{f(\bar{\xi})-f(z)\}\bar{\eta}]\big\|_\BB\le [f]_\aa \E[\aa(|\bar{\xi}-z|)|\bar{\eta}|].$$
Therefore, the proof is completed.
\end{proof}
To characterize  properties of \eqref{decop},
 consider the following PDE for $u: [0,T]\times\R^d\to \R^d$:
\beq\label{PDE}
\frac{\partial }{\partial t}u_t(x)+(\L_t^\gamma u_t)(x)+ b_t^{(0)}(x)=\lambda u_t(x),\ \ u_T=0,
\end{equation}
where $\ll>0$  is a constant, and
\beq\label{gen}
\L_t^\gamma  :=\frac{1}{2}\mathrm{tr} \big\{(\sigma_t\sigma_t^\ast)\nabla^2\big\}+ b_t(\cdot,P_t^*\gg)\cdot\nabla.
 \end{equation}
By \cite[Theorem 2.1]{YZ} and {\bf (A)}, for large enough constants $\lambda,c>0$ independent of $\gamma$, \eqref{PDE} has  a unique solution ${u}^{\lambda,\gamma}$ satisfying
\begin{align}\label{u0}
\|u^{\ll,\gg}\|_\infty+\|\nabla  {u}^{\lambda,\gamma}\|_{\infty}\leq \frac{1}{5},\ \ \|\nabla^2  {u}^{\lambda,\gamma}\|_{\tilde{L}_{p_0}^{q_0}(T)}\leq c.
\end{align}
So, for any $t\in [0,T]$,
\begin{align}\label{The}x\mapsto \Theta^{\lambda,\gamma}_t(x):=x+ {u}^{\lambda,\gamma}_t(x),\ \ x\in \R^d
 \end{align}is a homeomorphism on $\R^d$.

 Moreover, for any $\gg\in \scr P_k$, $t\in[0,T]$, consider
\begin{equation}\label{acd}
\d \theta^{\lambda,\gamma}_t(x)=b^{(1)}_t((\Theta^{\lambda,\gamma}_t)^{-1}(\theta^{\lambda,\gamma}_t(x)),P_t^\ast\gamma)\d t, \ \  \theta^{\lambda,\gamma}_0(x)=\Theta^{\lambda,\gamma}_0(x), x\in\R^{d},
\end{equation}
and let \begin{equation}\label{thv}\tilde{\theta}^{\lambda,\gamma}_t(x)=(\Theta^{\lambda,\gamma}_t)^{-1}( \theta^{\lambda,\gamma}_t(x)),\ \ t\in [0,T], x\in\R^{d}.
\end{equation}
Then we have
\begin{equation}\label{thv1}
\d \Theta^{\lambda,\gamma}_t(\tilde{\theta}^{\lambda,\gamma}_t(x))=b^{(1)}_t(\tilde{\theta}^{\lambda,\gamma}_t(x),P_t^\ast\gamma)\d t, \ \ t\in [0,T], \tilde{\theta}^{\lambda,\gamma}_0(x)=x\in\R^{d}.
\end{equation}
\begin{lem}\label{poa01} Let $\si$ and $b$ satisfy  {\bf (A)}. Then the following assertions hold.
\beg{enumerate}
\item[$(1)$]For any $p\geq 1$, there exists a constant $c_p>0$ such that
\begin{align}\label{soi}
\E[|X^{x,\gamma}_t-\tilde{\theta}^{\lambda,\gamma}_t(x)|^p]\leq c_pt^{\frac{p}{2}},\ \ t\in[0,T], x\in\R^d, \gg\in \scr P_k.
\end{align}

 \item[$(2)$] For any $\alpha\in\scr A$, there exists a constant $c>0$ such that the gradient estimate holds:
\beq\label{BS001}\beg{split}&|\nabla P^{\gamma}_tf |(x):= \limsup_{|y-x|\to 0}\frac{|P^{\gamma}_tf(y) -P^{\gamma}_tf(x)|}{|y-x|}\\
&\leq \frac{c\alpha(t^{\frac{1}{2}})}{\sqrt{t}},\ \ \ \ [f]_\alpha\leq 1, \ x\in\R^d,\gamma\in\scr P_k,t\in(0,T].\end{split}\end{equation}
\end{enumerate}
\end{lem}

\begin{proof} (1)  We will use Zvonkin's transform defined in \eqref{The}.
 By It\^o's formula (see \cite[Lemma 3.3]{YZ}), we derive
\beq\label{E-X}\begin{split}
\d \Theta^{\lambda,\gamma}_t(X^{x,\gamma}_t)&= \big\{\lambda   u^{\lambda,\gamma}_t(X^{x,\gamma}_t) + b_t^{(1)}(X^{x,\gamma}_t, P_t^*\gg)\big\}\d t +\big\{(\nabla\Theta_t^{\lambda,\gamma}) \sigma_t\big\}(X^{x,\gamma}_t)\,\d W_t.
\end{split}\end{equation}
 By {\bf (A)}, \eqref{u0}, there exists a constant $C>1$ such that
\beg{align*} &C^{-1} |X^{x,\gamma}_t-\tilde{\theta}^{\lambda,\gamma}_t(x)|\le  \big| \Theta^{\lambda,\gamma}_t(X^{x,\gamma}_t)-\Theta^{\lambda,\gamma}_t(\tilde{\theta}^{\lambda,\gamma}_t(x))\big|\le C  |X^{x,\gamma}_t-\tilde{\theta}^{\lambda,\gamma}_t(x)|,\\
& \big| b_t^{(1)}(X^{x,\gamma}_t,   P_t^*\gg)-b_t^{(1)}(\tilde{\theta}^{\lambda,\gamma}_t(x),  P_t^*\gg)\big|\le C  |X^{x,\gamma}_t-\tilde{\theta}^{\lambda,\gamma}_t(x)|,\\
&\big|\lambda   u^{\lambda,\gamma}_t(X^{x,\gamma}_t) \big|+\big\|\big\{(\nabla\Theta_t^{\lambda,\gamma}) \sigma_t\big\}(X^{x,\gamma}_t)\big\|\le C,\ \ (t,x,\gg)\in [0,T]\times\R^d\times \scr P_k.\end{align*}
This together with \eqref{thv1}, \eqref{E-X} and Gronwall's inequality implies \eqref{soi}.

 (2) For any measurable $f:\R^d\to\R$ with $[f]_\alpha\leq 1$, take
$$f_n:= [(-n)\lor f]\land n,\ \ n\ge 1.$$
By an approximation technique, it is sufficient to prove \eqref{BS001} for $f\in\scr B_b(\R^d)$ with $ [f]_\alpha\leq 1$.
According to \cite[Theorem 4.1]{YZ}, there exists a constant $c_0>0$ such that for any $\gg\in\scr P_k$, the log-Harnack inequality
\begin{align*}P_t^\gg \log f(x)\le \log P_t^\gg f(y)+ \ff{c_0}t |x-y|^2,\ \ x,y\in\R^d, t\in (0,T], f\in \B_b^+(\R^d)\end{align*}
holds, so that \cite[Proposition 1.3.8]{Wbook} implies
$$|\nabla P^{\gamma}_tf |\leq \frac{\sqrt{2c_0}}{\sqrt{t}}\{P_t^\gamma |f|^2\}^{\frac{1}{2}},\ \ f\in\scr B_b(\R^d), t\in(0,T], \gamma\in\scr P_k.$$
Observe that for any $f\in\scr B_b(\R^d)$ with $[f]_\alpha\leq 1$,
\begin{align}\label{grd}|\nabla P^{\gamma}_tf |(x)
\nonumber&\leq \inf_{z\in\R}\frac{\sqrt{2c_0}}{\sqrt{t}}\{P_t^\gamma (|f-z|^2)(x)\}^{\frac{1}{2}}\\
&\leq \frac{\sqrt{2c_0}}{\sqrt{t}}\{\E(\alpha(|X_t^{x,\gamma}-\tilde{\theta}^{\lambda,\gamma}_t(x)|)^2)\}^{\frac{1}{2}}\\
\nonumber&\leq \frac{\sqrt{2c_0}}{\sqrt{t}}\alpha(\{\E(|X_t^{x,\gamma}-\tilde{\theta}^{\lambda,\gamma}_t(x)|^2)\}^{\frac{1}{2}}),\ \ x\in\R^d, t\in(0,T],
\end{align}
where in the last step, we used \eqref{ale} for $\eta=\alpha(\xi)$ with $\xi=|X_t^{x,\gamma}-\tilde{\theta}^{\lambda,\gamma}_t(x)|$ and $p=2$.
 Therefore, \eqref{BS001} follows from \eqref{grd}, \eqref{soi} and \eqref{AA}.
\end{proof}

To verify  \eqref{shlog}, in the following Lemma \ref{L1} and Lemma \ref{DLP} we will prove
\begin{align}\label{deset}\int_0^t\{\W_\alpha(P_s^*\gg,P_s^*\tilde{\gg})+\W_2(P_s^*\gg,P_s^*\tilde{\gg})\}^2\d s\leq c\W_2(\gamma,\tilde{\gamma})^2,\ \ t\in[0,T],\gamma,\tilde{\gamma}\in\scr P_2
\end{align}
for some constant $c>0$.

\begin{lem}\label{L1}
Assume {\bf(A)}. Then there exists a constant $c>0$ such that
\begin{align}\label{WAr}
\W_k(P_t^\ast\gamma,P_t^\ast \tt\gamma)\leq c \W_k(\gamma,\tt\gamma)+c \int_0^t \W_\alpha(P_s^\ast\gamma,P_s^\ast \tt\gamma) \d s,\ \ t\in [0,T], \gg,\tt\gg\in \scr P_k.
\end{align}
\end{lem}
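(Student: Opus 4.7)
The plan is to couple the two solutions driven by the same Brownian motion (optimally in $\W_k$ at time $0$) and regularize via Zvonkin's transform. Take an optimal $\W_k$-coupling $(X_0, Y_0)$ of $(\gg, \tt\gg)$ and let $X_t := X_t^\gg$, $Y_t := X_t^{\tt\gg}$ solve the McKean--Vlasov SDE \eqref{E0} with these initial values, driven by the same $W$. Since $\W_k(P_t^\ast\gg, P_t^\ast\tt\gg)^k \le \E|X_t - Y_t|^k$, it suffices to control $\|X_t - Y_t\|_{L^k(\P)}$.

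The key idea is to apply the \emph{same} Zvonkin map $\Theta^{\ll,\gg}_t$ from \eqref{The} (built from the $\gg$-flow) to \emph{both} processes, rather than using two different transforms. For $X_t$ this reproduces \eqref{E-X}. For $Y_t$, whose drift uses $P_t^\ast\tt\gg$ instead of $P_t^\ast\gg$, It\^o's formula combined with \eqref{PDE} yields
\[ \d\Theta^{\ll,\gg}_t(Y_t) = \big\{\ll u^{\ll,\gg}_t(Y_t) + b^{(1)}_t(Y_t, P_t^\ast\tt\gg) + \nn u^{\ll,\gg}_t(Y_t)\big[b^{(1)}_t(Y_t, P_t^\ast\tt\gg) - b^{(1)}_t(Y_t, P_t^\ast\gg)\big]\big\}\,\d t + \big\{(\nn\Theta^{\ll,\gg}_t)\si_t\big\}(Y_t)\,\d W_t, \]
the last drift term being an error coming from the mismatch between the PDE's flow and $Y_t$'s flow. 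By \eqref{u0} the map $\Theta^{\ll,\gg}_t$ is bi-Lipschitz with constants close to $1$, so $|X_t - Y_t|$ and $|Z_t| := |\Theta^{\ll,\gg}_t(X_t) - \Theta^{\ll,\gg}_t(Y_t)|$ are equivalent up to absolute constants.

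Subtracting the two transformed SDEs and invoking $(A_2)$ together with $\|\nn u^{\ll,\gg}\|_\infty \le 1/5$, the drift of $Z_t$ is bounded in magnitude by $C|X_t - Y_t| + CK\big[\W_\aa(P_t^\ast\gg, P_t^\ast\tt\gg) + \W_k(P_t^\ast\gg, P_t^\ast\tt\gg)\big]$, where the two Wasserstein contributions come from the $b^{(1)}$ Lipschitz bound and the error term. The diffusion coefficient difference $(\nn\Theta^{\ll,\gg}_t\si_t)(X_t) - (\nn\Theta^{\ll,\gg}_t\si_t)(Y_t)$ is controlled by $|X_t - Y_t|$ times integrands involving $|\nn^2 u^{\ll,\gg}| \in \tt L_p^q(T)$ (via \eqref{u0}) and $|\nn\si| \le \sum_{i=1}^l f_i$ (via $(A_1)$).

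Taking the $L^k(\P)$-norm of $Z_t$ via Minkowski on the drift integral and Burkholder--Davis--Gundy on the stochastic integral, and applying Krylov's estimate along the coupled trajectories to absorb the $\tt L_p^q$ factors, one arrives at
\[ \|Z_t\|_{L^k(\P)} \le C\|Z_0\|_{L^k(\P)} + C\int_0^t \|Z_s\|_{L^k(\P)}\,\d s + C\int_0^t \big[\W_\aa(P_s^\ast\gg, P_s^\ast\tt\gg) + \W_k(P_s^\ast\gg, P_s^\ast\tt\gg)\big]\,\d s. \]
Since $\|Z_0\|_{L^k(\P)} \le (6/5)\W_k(\gg, \tt\gg)$ (by optimality of the initial coupling) and $\|Z_t\|_{L^k(\P)} \ge (4/5)\W_k(P_t^\ast\gg, P_t^\ast\tt\gg)$ (by bi-Lipschitz-ness of $\Theta^{\ll,\gg}_t$), Gr\"onwall's inequality applied to $\phi(t) := \W_k(P_t^\ast\gg, P_t^\ast\tt\gg)$ absorbs the $\W_k$ integrand and yields \eqref{WAr}. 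The main technical obstacle will be the diffusion: since $|\nn^2 u^{\ll,\gg}|$ and $|\nn\si|$ lie only in $\tt L_p^q(T)$ rather than $L^\infty$, propagating them through BDG requires careful Krylov-type estimates along the coupled paths, combined with H\"older's inequality in $(t,\omega)$ to close the Gr\"onwall argument.
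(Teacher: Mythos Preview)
Your overall strategy---couple optimally at time $0$, apply the \emph{same} Zvonkin transform $\Theta^{\ll,\gg}$ to both processes, and control the transformed difference---is exactly the paper's approach, and your It\^o expansion for $\Theta^{\ll,\gg}_t(Y_t)$ is a correct rearrangement of the paper's \eqref{EXv}.

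The gap is the passage from the SDE for $Z_t$ to your displayed integral inequality on $\|Z_t\|_{L^k(\P)}$. After BDG the martingale part contributes
$\big(\E[(\int_0^t |Z_s|^2 g_s^2\,\d s)^{k/2}]\big)^{1/k}$,
where $g_s$ collects $|\nn^2 u^{\ll,\gg}|+|\nn\si|$ along the paths. Krylov's estimate bounds $\E\int_0^t g_s^2\,\d s$, but neither it nor H\"older in $(t,\omega)$ allows you to decouple $|Z_s|$ from $g_s$ so as to produce $\int_0^t\|Z_s\|_{L^k}\,\d s$; a H\"older splitting instead forces $\sup_{s\le t}|Z_s|$ and a higher moment onto the left side, so the deterministic Gronwall loop does not close. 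Your final sentence flags this obstacle without resolving it.

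The paper handles the diffusion term pathwise rather than in $L^k$. The maximal functional inequality (\cite[Lemma~2.1]{XXZZ}) gives
$|(\nn\Theta^{\ll,\gg}\si)(X_s)-(\nn\Theta^{\ll,\gg}\si)(Y_s)|\le c|X_s-Y_s|\,h_s$
with $h_s$ built from Hardy--Littlewood maximal functions of $|\nn^2 u^{\ll,\gg}|+|\nn\si|$ at $X_s$ and $Y_s$; Khasminskii's estimate (\cite[Lemma~4.1]{XXZZ}) then shows $\scr L_t:=\int_0^t(1+h_s^2)\,\d s$ has all exponential moments. This yields
\[
\d|Z_t|^{k+1}\le \d M_t + |Z_t|^{k+1}\,\d\scr L_t + c\big[\W_\aa+\W_k\big](P_t^\ast\gg,P_t^\ast\tt\gg)\,|Z_t|^k\,\d t,
\]
to which the \emph{stochastic} Gronwall inequality (\cite[Lemma~3.7]{XZ}) applies, giving a conditional bound on $\E[\sup_{s\le t}|Z_s|^k\,|\,\F_0]$. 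Taking expectations and applying one further (now legitimate) deterministic Gronwall in $\W_k(P_s^\ast\gg,P_s^\ast\tt\gg)$ produces \eqref{WAr}. You should replace the ``BDG $+$ Krylov $+$ H\"older'' step by this maximal-function $+$ Khasminskii $+$ stochastic-Gronwall package.
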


\begin{proof} We take  $\F_0$-measurable random variables $X_0^\gg,X_0^{\tt\gg}$  such that
\beq\label{FGG} \L_{X_0^\gg}=\gg,\ \ \L_{X_0^{\tt\gg}}=\tt\gg,\ \ \W_k(\gg,\tt\gg)^k=\E[|X_0^\gg-X_0^{\tt\gg}|^k].\end{equation}
Recall that $\Theta^{\lambda,\gamma}_t$ is defined in \eqref{The}. By \eqref{PDE}, \eqref{gen} and It\^o's formula,
  we derive
  \beq\label{E-X1}\begin{split}
\d \Theta^{\lambda,\gamma}_t(X^{\gamma}_t)&= \big\{\lambda   u^{\lambda,\gamma}_t(X^{\gamma}_t) + b_t^{(1)}(X^{\gamma}_t, P_t^*\gg)\big\}\d t +\big\{(\nabla\Theta_t^{\lambda,\gamma}) \sigma_t\big\}(X^{\gamma}_t)\,\d W_t,
\end{split}\end{equation}
and
\beq\label{EXv}\begin{split}
&\d \Theta^{\lambda,\gamma}_t(X^{\tt\gamma}_t) =\big\{ \lambda   u^{\lambda,\gamma}_t(X^{\tt\gamma}_t) + b_t^{(1)}(X^{\tt\gamma}_t, P_t^*\gg)\big\}\d t \\
&\qquad +\nabla\Theta^{\lambda,\gamma}_t(X^{\tilde{\gamma}}_t)[b_t(X^{\tt\gamma}_t,P_t^\ast\tt\gamma) -b_t(X^{\tt\gamma}_t,P_t^\ast\gamma)]\d t +\big\{(\nabla\Theta_t^{\lambda,\gamma})\sigma_t\big\}(X^{\tt\gamma}_t)\,\d W_t.
\end{split}\end{equation}
Combining this with  \eqref{E-X1} and  {\bf (A)},
we prove the  desired estimate  by  using the maximal functional inequality,  Khasminskii's estimate and stochastic Gronwall's inequality, see for instance the proof of \cite[Lemma 2.1]{HWJMAA}
for details. Below we simply outline the procedure.

By $(A_2)$ we have
\beg{align*} &|b_t(X^{\tt\gamma}_t,P_t^\ast\tt\gamma) -b_t(X^{\tt\gamma}_t,P_t^\ast\gamma)|+ |b_t^{(1)}(X^{\tt\gamma}_t,P_t^\ast\gamma) -b_t^{(1)}( X^{\gamma}_t,P_t^\ast\gamma)|\\
&\le K\big\{|X^{\gamma}_t-X^{\tt\gamma}_t|+\W_\aa(P_t^\ast\gamma, P_t^*\tt\gg)+ \W_k(P_t^*\gg, P_t^*\tt\gg)\big\}.\end{align*}
Combining this with \eqref{E-X1}, \eqref{EXv},   $(A_1)$,   the maximal functional inequality and Khasminskii's estimate  (see \cite[Lemma 2.1 and Lemma 4.1]{XXZZ}),    we derive
\beg{align*}&\d\big|\Theta^{\lambda,\gamma}_t(X^{\gamma}_t)-\Theta^{\lambda,\gamma}_t(X^{\tt\gamma}_t)\big|^{k+1}\le \d M_t+|X^{\gamma}_t-X^{\tt\gamma}_t|^{k+1}\d \scr L_t \\
&\qquad+  c_1\big\{\W_\aa(P_t^\ast\gamma, P_t^*\tt\gg)+ \W_k(P_t^*\gg, P_t^*\tt\gg)\big\}|\Theta^{\lambda,\gamma}_t(X^{\gamma}_t)-\Theta^{\lambda,\gamma}_t(X^{\tt\gamma}_t)\big|^{k}\d t,
 \end{align*}
where $c_1>0$ is a constant, $\scr L_t$ is an adapted increasing process with $\E[\e^{\delta \scr L_T}]<\infty$ for any $\delta>0$, and $M_t$ is a local martingale.
Since \eqref{u0} implies
$$\ff 1 2  |X^{\tt\gamma}_t-X^{\gamma}_t|\le  |\Theta^{\lambda,\gamma}_t(X^{\gamma}_t)-\Theta^{\lambda,\gamma}_t(X^{\tt\gamma}_t)\big|\le 2 |X^{\tt\gamma}_t-X^{\gamma}_t|,$$
by the  stochastic Gronwall  inequality   (see \cite[Lemma 3.7]{XZ}),  we find a constant $c_2>1$ such that
\beg{align*}& \bigg\{\E\Big[\sup_{s\in [0,t]}  |X^{\tt\gamma}_s-X^{\gamma}_s|^k\Big|\F_0\Big]\bigg\}^{1+k^{-1}}
- c_2 |X_0^\gg-X_0^{\tt\gg}|^{k+1} \\
  &\le  c_2   \int_0^t   \big\{\W_\aa(P_s^\ast\gamma, P_s^*\tt\gg)+ \W_k(P_s^*\gg, P_s^*\tt\gg)\big\}\E\Big[  |X^{\tt\gamma}_s-X^{\gamma}_s|^{k}\Big|\F_0\Big]\d s,\ \ t\in [0,T].\end{align*}
So, there exists a constant $c_3>0$ such that for any $t\in [0,T]$,
\beg{align*}& \E\Big[\sup_{s\in [0,t]}|X^{\tt\gamma}_s-X^{\gamma}_s|^k\Big|\F_0\Big] -  c_2   |X_0^\gg-X_0^{\tt\gg}|^k\\
 &\le   c_2   \bigg( \int_0^t   \big\{\W_\aa(P_s^\ast\gamma, P_s^*\tt\gg)+ \W_k(P_s^*\gg, P_s^*\tt\gg)\big\}\E\Big[  |X^{\tt\gamma}_s-X^{\gamma}_s|^{k}\Big|\F_0\Big] \d s\bigg)^{\ff k{k+1}}\\
 &\le \ff 1 2 \E\Big[\sup_{s\in [0,t]}|X^{\tt\gamma}_s-X^{\gamma}_s|^k\Big|\F_0\Big]+ c_3 \bigg( \int_0^t   \big\{\W_\aa(P_s^\ast\gamma, P_s^*\tt\gg)+ \W_k(P_s^*\gg, P_s^*\tt\gg)\big\}
 \d s\bigg)^{k}.\end{align*}
This together with \eqref{FGG} yields
\beg{align*} &  \W_k(P_t^*\gg,P_t^*\tt\gg)\le \sup_{s\in [0,t]} \big(\E[|X^{\tt\gamma}_s-X^{\gamma}_s|^k]\big)^{\ff 1 k}\\
&\le (2c_2)^{\ff 1 k} \W_k(\gg,\tt\gg) +  (2c_3)^{\ff 1 k}  \int_0^t   \big\{\W_\aa(P_s^\ast\gamma, P_s^*\tt\gg)+ \W_k(P_s^*\gg, P_s^*\tt\gg) \big\}\d s,\ \ t\in [0,T].\end{align*}
By Gronwall's inequality, this implies the desired estimate   for some constant $c>0$.
\end{proof}

Noting that $X_t^{x,\gg}$ solves \eqref{E0} if the initial value $x$ is random with distribution $\gg$, by the standard Markov property of $X_t^{x,\gg}$,  we have
\begin{align}\label{docfo}P_t f(\gamma):=\int_{\R^d} f(x)(P_t^*\gg)(\d x) =\int_{\R^d}P^{\gamma}_t f(x)\gamma(\d x),\ \ f\in\scr B_b(\R^d).
\end{align}
The following lemma provides a regularity estimate on $P_t^*$,   which together with Lemma \ref{L1} implies the desired \eqref{deset}.
\begin{lem}\label{DLP}
Assume {\bf (A)}. 
Then there exists a constant $c>0$ such that
 \beq\label{FST}
   \W_\alpha(P_t^\ast\gg,P_t^\ast\tt\gg)\le    c  \W_k(\tt\gg,\gg)\ff{ \aa(t^{\ff 1 2}) }{\ss t},\ \
     t\in (0,T],  \gg,\tt\gg\in \scr P_k.\end{equation}
 Consequently,  there exists a constant $c>0$ such that for any $\gg,\tt\gg\in \scr P_k,$
\beq\label{END}\sup_{t\in [0,T]}  \W_k(P_t^\ast\gg,P_t^\ast\tt\gg)\le c \W_k(\gg,\tt\gg).\end{equation}
\end{lem}
\begin{proof}    By Lemma \ref{L1},  \eqref{END} follows from \eqref{FST} and  the fact $\int_0^T\frac{\alpha(t^{\frac{1}{2}})}{\sqrt{t}}\d t<\infty$. So, we only need to prove \eqref{FST}.

Let $X_0^\gg$ and $X_0^{\tt\gg}$ be in \eqref{FGG}.
For any $\vv\in [0,2]$, let
$$ X_0^{\gg^\vv}:=X_0^\gg+\vv(X_0^{\tt\gg}-X_0^\gg), \ \ \ \gg^\vv:=\L_{X_0^{\gg^\vv}}, $$ and let $X_t^{\gg^\vv}$ solve \eqref{E0} with initial value $X_0^{\gg^\vv}.$
Then
\beq\label{GG2}  \gg^{\vv}(|\cdot|)\le 2\|\gg\|_k+2\|\tt\gg\|_k,\ \ \vv\in [0,2],\end{equation}
\beq\label{TT}  \W_k(\gg^{\vv},\gg^{\vv+r})^k\le \E\big[|X_0^{\gg^\vv}- X_0^{\gg^{\vv+r}}|^k\big] = r^k \W_k(\gg,\tt\gg)^k,\ \ \vv,r\in [0,1].\end{equation}
For any $\vv\ge 0$, consider the SDE
\beq\label{Evv} \d X_t^{x,\gg^\vv}= b_t(X_t^{x,\gg^\vv}, P_t^*\gg^\vv)\d t+\si_t(X_t^{x,\gg^\vv})\d W_t,\ \ X_0^{x,\gg^\vv}= x, t\in [0,T].\end{equation}
For any $r\in (0,1)$, let
\beg{align*}  \eta_t^{\vv,r}=[\sigma_t^\ast (\sigma_t\sigma_t^\ast)^{-1}] (X^{x,\gamma^\vv}_t)[b_t(X^{x,\gamma^\vv}_t, P_t^*\gg^{\vv+r})-b_t(X^{x,\gamma^\vv}_t, P_t^*\gg^\vv)],\ \ t\in [0,T].
\end{align*}
By {\bf (A)},    there exists a constant $c_1>0$ such that
\begin{align}\label{etk}\sup_{t\in[0,T] }|\eta^{\varepsilon,r}_t|\leq c_1\big\{\W_\alpha(P_t^\ast\gamma^\varepsilon, P_t^\ast\gamma^{\vv+r})+\W_k(P_t^\ast\gamma^\varepsilon, P_t^\ast\gamma^{\vv+r})\big\},\ \ r,\vv\in [0,1].
\end{align}
By Girsanov's theorem,
$$R^{\varepsilon,r}_t:=\exp\left\{\int_0^t\<\eta^{\varepsilon,r}_s,\d W_s\>-\frac{1}{2}\int_0^t|\eta^{\varepsilon,r}_s|^2\d s\right\}, \ \ t\in[0,T]$$ is a martingale, and
   $$W_t^{\vv,r}=  W_t- \int_0^t \eta_s^{\vv,r}\d s,\ \ \ t\in [0,T]$$
 is a Brownian motion under  the probability measure $\Q^{\vv,r}:=R^{\varepsilon,r}_T \P.$
 Rewrite \eqref{Evv} as
\begin{align*}\d X^{x,\gamma^\vv}_t= b_t(X^{x,\gamma^\vv}_t, P_t^*\gg^{\vv+r})\d t  +\si_t(X^{x,\gamma^\vv}_t)\d W^{\vv,r}_t,\ \ X^{x,\gamma^\vv}_0=x,\ \ t\in [0,T].
\end{align*}
By the weak uniqueness we obtain
 $$\L_{\{X^{x,\gamma^\vv}_t\}_{t\in[0,T]}|\Q^{\vv,r}}=\L_{\{X^{x,\gamma^{\varepsilon+r}}_t\}_{t\in[0,T]}},$$ where $\L_{\cdot|\Q^{\vv,r}}$ is the law under $\Q^{\vv,r}$,
so that
\begin{align*} P_t^{\gamma^{\varepsilon+r}}f(x)-P^{\gamma^\vv}_tf(x)=\mathbb{E} \left[f(X^{x,\gamma^\vv}_t)(R_t^{\varepsilon,r}-1)\right],\ \ f\in \scr B_b(\R^d),\varepsilon,r\in(0,1].
\end{align*}
Hence, by \eqref{docfo}, we have
\begin{align*}
&P_tf (\gamma^{\varepsilon+r})-P_tf(\gamma^\vv)= \gamma^{\varepsilon+r}(P_t^{\gamma^{\varepsilon+r}}f)-\gamma^\vv(P^{\gamma^\vv}_tf)\\
&= \gamma^{\varepsilon+r} (P_t^{\gamma^{\varepsilon+r}}f-P^{\gamma^\vv}_tf) +\gamma^{\varepsilon+r}(P_t^{\gamma^\vv}f)-\gamma^\vv (P^{\gamma^\vv}_tf)\\
&=\int_{\R^d}\mathbb{E} \left[f(X^{x,\gamma^\vv}_t)(R_t^{\varepsilon,r}-1)\right] \gamma^{\varepsilon+r}(\d x)+ \E \big[P_t^{\gg^\vv} f(X_0^{\gamma^{\vv+r}})- P_t^{\gg^\vv} f(X_0^{\gg^\vv})\big],
\end{align*}
so that
\beq\label{PPo}\beg{split}
&\W_\aa(P_t^*\gg^{\vv+r}, P_t^*\gg^\vv)^2=\sup_{[f]_\aa\le 1} \big|P_tf(\gamma^{\varepsilon+r})-P_tf(\gamma^\vv)\big|^2\le I_1+I_2,\\
&I_1:= 2\sup_{[f]_\aa\le 1} \bigg|\int_{\R^d}\mathbb{E} \left[f(X^{x,\gamma^\vv}_t)(R_t^{\varepsilon,r}-1)\right] \gamma^{\varepsilon+r}(\d x)\bigg|^2,\\
& I_2:=2\sup_{[f]_\aa\le 1}\bigg|\E \big[P_t^{\gg^\vv} f(X_0^{\gamma^{\vv+r}})- P_t^{\gg^\vv} f(X_0^{\gg^\vv})\big]\bigg|^2.\end{split}
\end{equation}
Below we estimate $I_1$ and $I_2$ respectively.

By \eqref{etk}, we obtain
\begin{align}\label{RTo}
 \nonumber&\E|R_t^{\varepsilon,r}-1|^2= \E\big[(R_t^{\varepsilon,r})^2-1\big]
 \leq \mathrm{esssup}_{\Omega}(\e^{\int_0^t|\eta_s^{\varepsilon,r}|^2\d s}-1)\\
 &\leq \mathrm{esssup}_{\Omega}\left(\e^{\int_0^t|\eta_s^{\varepsilon,r}|^2\d s}\int_0^t|\eta_s^{\varepsilon,r}|^2\d s\right)\\
\nonumber &\leq \psi(\vv,r)\int_0^t\big\{\W_\alpha(P_s^\ast\gamma^\vv,P_s^\ast\gamma^{\vv+r})^2+\W_k(P_s^\ast\gamma^\vv,P_s^\ast\gamma^{\vv+r})^2\big\}\d s,
\end{align}  where  for $c_2:= 2 c_1^2$,
\beq\label{bar0} \psi(\vv,r):= c_2 \e^{c_2 \int_0^T\{\W_\alpha(P_s^\ast\gamma^\vv,P_s^\ast\gamma^{\vv+r})^2+\W_k(P_s^\ast\gamma^\vv,P_s^\ast\gamma^{\vv+r})^2\}\d  s}.\end{equation}
By    \eqref{alw}  and \eqref{ES2},  we have
\beq\label{bar} \bar\psi :=\sup_{\vv,r\in [0,1]}\psi(\vv,\gg)<\infty. \end{equation}
Combining this with \eqref{AA},  \eqref{soi}, \eqref{RTo}   and \eqref{aas} with $z=\tilde{\theta}^{\lambda,\gamma^\varepsilon}_t(x)$, where $\tilde{\theta}^{\lambda,\gamma^\varepsilon}_t(x)$ is defined in \eqref{thv} with $\gamma^\varepsilon$ replacing $\gamma$,  we can find       constants $k_1, k_2>1$ such that
 \beg{align*} &\bigg(\int_{\R^d} \sup_{[f]_\aa\le 1}  \Big|\E \left[f(X^{x,\gamma^\vv}_t)(R_t^{\varepsilon,r}-1)\right]\Big| \gg^{\vv+r}(\d x)\bigg)^2 \\
&\le   \bigg(\int_{\R^d}\aa\big(k_1 t^{\ff 1 2}\big) \sup_x\big(\E[|R_t^{\varepsilon,r}-1|^2]\big)^{\ff 1 2} \gg^{\vv+r}(\d x)\bigg)^2\\
&\le   \aa\big(k_1t^{\ff 1 2}\big)^2 \sup_{x}  \E[|R_t^{\varepsilon,r}-1|^2] \\
&  \leq k_2\aa\big( t^{\ff 1 2}\big)^2 \psi(\vv,r) \int_0^t\big\{\W_\alpha(P_s^\ast\gamma^\vv,P_s^\ast\gamma^{\vv+r})^2+\W_k(P_s^\ast\gamma^\vv,P_s^\ast\gamma^{\vv+r})^2\big\}\d s,\ \ t\in [0,T].
\end{align*}
Combining this with  \eqref{AA},  \eqref{WAr}, \eqref{TT},  \eqref{GG2},
 and letting
\beg{align*}
 \GG_t(\vv,r):=  \W_\aa(P_t^* \gg^\vv, P_t^*\gg^{\vv+r})^2+\int_0^t\W_\aa(P_s^* \gg^\vv, P_s^*\gg^{\vv+r})^2\d s,\end{align*}
we find a constant $c_4>0$ such that
 \beq\label{I1} I_1 \le  c_4\aa\big(\sqrt{T}\big)^2  \psi(\vv,r) \bigg(r^2 \W_k(\gg,\tt\gg)^2+ \int_0^t \GG_s(\vv,r)\d s\bigg),\ \ t\in [0,T].\end{equation}
By \eqref{BS001}, we find a constant $c_5>0$ such that
 \begin{equation*} \beg{split}
\sup_{[f]_\aa\le 1}\big|\nabla P^{\gamma^\vv}_tf \big|(x)
 & \le \ff{c_5}{\ss t} \aa\big( t^{\ff 1 2}\big). \end{split}\end{equation*}
 Combining this with \eqref{AA},
 we find a constant $c_6>0$ such that
\begin{align} \label{I21}\nonumber I_2  &\le 2\sup_{[f]_\aa\le 1} \bigg(\E\bigg[|X_0^\gg-X_0^{\tt\gg}|\int_0^r |\nn P_t^{\gg^\vv} f(X_0^{\gg^{\vv+\theta}})|\d \theta \bigg]\bigg)^2\\
&\le \ff{c_6\aa\big(t^{\ff 1 2}\big)^2}t r^2(\E|X_0^\gg-X_0^{\tt\gg}|)^2 \\
\nonumber&\le \ff{c_6\aa\big(t^{\ff 1 2}\big)^2r^2}t
\big(  \E[|X_0^\gg-X_0^{\tt\gg}|^k]\big)^{\ff 2 k}.\end{align}
 Let
\begin{align}\label{tal}\tilde{\aa}(r):=\bigg(\int_0^r \ff{\aa(t)^2}{t}\d t\bigg)^{\ff 1 2},\ \ \ r\ge 0.
\end{align}
By \eqref{AA}, we find  some constant $c'>0$ such that
\beq\label{ASS} \beg{split} & \int_0^T \ff{  \aa(r t^{\ff 1 2})^2}t \d t =   2   \int_0^{ rT^{\ff 1 2} } \ff{\aa(s)^2}s\d s\le c' \tt\aa(r)^2 <\infty,\ \ r\ge 1.\end{split}\end{equation}
 So, \eqref{I21} together with \eqref{PPo} and \eqref{I1} yields that for some constant $c_7>0$,
 \beq\label{LI3} \beg{split} &\GG_t(\vv,r)\le c_7  r^2\W_k(\gg,\tt\gg)^2H_t(\vv,r)
+  c_7\psi(\vv,r) \int_0^t \GG_s(\vv,r)\d s,\\
&H_t(\vv,r):=\psi(\vv,r)+ \tilde{\aa}\big(1\big)^2+\frac{\aa\big(t^{\ff 1 2}\big)^2}{t} ,\ \ \   \vv,r\in [0,1], t\in [0,T].\end{split}\end{equation}
By Gronwall's inequality and \eqref{LI3},  for any $   \vv,r\in [0,1]$ we have
\beg{align*} &\W_\aa(P_t^*\gg^\vv, P_t^*\gg^{\vv+r})^2\le\GG_t(\vv,r) \\
&\le  c_7  r^2\W_k(\gg,\tt\gg)^2 \bigg\{H_t(\vv,r)
  +  c_7\psi(\vv,r)  \e^{c_7\psi(\vv,r)T}\int_0^t H_s(\vv,r)\d s\bigg\},\ \ t\in [0,T].\end{align*}
This together with \eqref{WAr}, \eqref{ASS} and \eqref{bar0}-\eqref{bar}  implies that $\psi(\vv,r)$ is bounded in $(\vv,r)\in [0,1]^2$ with $\psi(\vv,r)\to c_2$ as $r\to 0$, so that by the dominated convergence theorem
 we find a constant $c >0$ such that
 \beq\label{AB} \beg{split}& \limsup_{r\downarrow 0}\ff{ \W_\aa(P_t^*\gg^\vv, P_t^*\gg^{\vv+r})}r  \le  c  \W_k(\tt\gg,\gg)\bigg\{\ff{ \aa(t^{\ff 1 2}) }{\ss t}   +1\bigg\}.
\end{split} \end{equation}
 By the triangle inequality,
$$\big| \W_\aa(P_t^*\gg, P_t^*\gg^\vv)- \W_\aa(P_t^*\gg, P_t^*\gg^{\vv+r})\big|\le \W_\aa(P_t^*\gg^\vv, P_t^*\gg^{\vv+r}),  \ \    \vv,r\in [0,1],$$
so that \eqref{AB}   implies that
  $ \W_\aa(P_t^*\gg, P_t^*\gg^\vv)$ is Lipschitz continuous (hence a.e. differentiable)  in $\vv\in [0,1]$ for any $t\in (0,T]$,  and
  \beg{align*} & \Big|\ff{\d}{\d \vv} \W_\aa(P_t^*\gg, P_t^*\gg^{\vv})\Big|\le   \limsup_{r\downarrow 0}\ff{ \W_\aa(P_t^*\gg^\vv, P_t^*\gg^{\vv+r}) }r\le c  \W_k(\tt\gg,\gg)\bigg\{\ff{ \aa(t^{\ff 1 2}) }{\ss t}   +1\bigg\},\ \ \vv\in [0,1].\end{align*}
This implies \eqref{FST} by noting that $\gg^1=\tt\gg$  and $\sup_{t\in[0,T]}\frac{\ss t}{\aa(t^{\ff 1 2})}\leq \frac{\sqrt{T}\vee 1}{\alpha (1)}$ due to \eqref{AA2} .
\end{proof}

\beg{proof}[Proof of Theorem \ref{LHI}] Let $k=2$. According to \cite[Theorem 2.5]{FYW2} for $D=\R^d$, see also \cite[Theorem 4.1]{YZ}, {\bf (A)} implies the following log-Harnack inequality for some constant $c_0>0$ and any $\gg\in\scr P_2$:
$$P_t^\gg \log f(x)\le \log P_t^\gg f(y)+ \ff{c_0}t |x-y|^2,\ \ x,y\in\R^d, t\in (0,T], f\in \B_b^+(\R^d).$$
Then by \cite[(4.13)]{FYW2}, see also \cite[Theorem 2.1]{HL}, it suffices to find a constant $c>0$ such that
 \beq\label{RR} \beg{split}
& \sup_{t\in (0,T]}\log \E[|R_t^{\gg,\tt\gg}|^2]\le c  \W_2(\gamma,\tilde{\gamma})^2,\ \  \gg,\tt\gg\in \scr P_2, \end{split}\end{equation}
where
\beg{align*} &R_t^{\gg,\tt\gg}:= \e^{\int_0^t\<\eta_s^{\gg,\tt\gg},\d W_s\> -\ff 1 2\int_0^t |\eta_s^{\gg,\tt\gg}|^2\d s},\\
& \eta_s^{\gg,\tt\gg}:= \big\{\si_s^*(\si_s\si_s^*)^{-1}\big\}(X_s^\gg)\big\{b_s(X_s^\gg, P_s^*\tt\gg)- b_s(X_s^\gg,P_s^*\gg)\big\},\ \ s\le t\le T.\end{align*}
Noting that   {\bf (A)} implies
 $$|\eta_s^{\gg,\tt\gg}|^2 \le c_1 \big\{\W_\aa(P_s^\ast\gg,P_s^*\tt\gg)^2+ \W_2(P_s^\ast\gg,P_s^*\tt\gg)^2\big\},\ \ s\in [0,T]$$ for some constant $c_1>0$,  we have
$$ \E[|R_t^{\gg,\tt\gg}|^2] \le   \e^{c_1\int_0^t \{\W_\aa(P_s^\ast\gg,P_s^*\tt\gg)^2+ \W_2(P_s^\ast\gg,P_s^*\tt\gg)^2\}\d s}. $$
Moreover, by \eqref{ASS} and  Lemma \ref{DLP}, there exists a constant $c>0$  such that
 \beg{align*}&\sup_{t\in [0,T]}\int_0^t \big\{\W_\aa(P_s^\ast\gg,P_s^*\tt\gg)^2+ \W_2(P_s^\ast\gg,P_s^*\tt\gg)^2\big\}\d s\le c \W_2(\gamma,\tilde{\gamma})^2.\end{align*}
 Therefore,   \eqref{RR} holds for some constant $c>0$.

\end{proof}

\section{Bismut Formula}

Let $k\in (1,\infty)$ and denote $k^*:=\ff{k}{k-1}$.
In this part, we consider the SDE \eqref{E0} with  $(\tt{\scr P},\tt\rr)= (\scr P_k, \W_k)$, where as in $(A_2)$ the drift $b$ is decomposed as
\beq\label{BB} b_t(x,\nu)= b_t^{(0)}(x)+ b_t^{(1)}(x, \nu),\ \ t\in [0,T], x\in\R^d, \ \nu\in  \scr P_k.\end{equation}
 We aim to establish Bismut type formula for the intrinsic derivative of $\scr P_k\ni\mu\mapsto P_tf(\mu)$
 for bounded measurable functions $f$ on $\R^d$, by only assuming that the extrinsic derivative
 $D^E b_t(x,\mu)(z)$ of the drift
 has a half-Dini continuity in $z\in\R^d$.

 To this end, we first recall the notions of intrinsic and extrinsic derivatives which go back to  \cite{AKR}, see     \cite{BRW20} and \cite{RW212}.

 \begin{defn}\label{ind} Let  $f\in C(\scr P_k;\BB)$ for a Banach space $\BB$.
The function $f$ is called intrinsically differentiable at a point $\mu\in\scr P_k$, if
 $$T_{\mu,k}:=L^k(\R^d\to\R^d;\mu)\ni\phi\mapsto D_\phi^If(\mu):= \lim_{\vv\downarrow 0} \ff{f(\mu\circ(id+\varepsilon\phi)^{-1})-f(\mu)}{\vv}\in\BB
$$ is a well  defined  bounded linear operator. In this case,   the norm of    the intrinsic derivative $D^I f(\mu)$  is given by
$$\|D^If(\mu)\|_{L^{k^*}(\mu)} :=\sup_{\|\phi\|_{L^k(\mu)}\le 1} \|D^I_\phi f(\mu)\|_\BB.$$
The function
  $f$   is called  intrinsically differentiable 
  on $\scr P_k$, if it is so at any $\mu\in \scr P_k$.
 \end{defn}
 Next, we recall  the (convexity) extrinsic derivative, see  e.g.  \cite[Definition 1.2]{RW212}.

   \begin{defn}\label{exd}
A real function $f$ on $\scr P_k$ is called extrinsically differentiable on $\scr P_k$ with derivative $D^Ef$ if
$$D^Ef(\mu)(x)=\lim_{\vv\downarrow 0} \ff{f((1-\varepsilon)\mu+\varepsilon\delta_x)-f(\mu)}{\vv}\in\R$$
exists for all $(x,\mu)\in\R^d\times\scr P_k$. When $f=(f^1,f^2,\cdots,f^d)$ is an $\R^d$-valued function on $\scr P_k$, we denote $D^Ef=(D^Ef^1,D^Ef^2,\cdots,D^Ef^d)$.
 \end{defn}
 \subsection{Main result  }
We will establish a Bismut formula for the intrinsic derivative of $P_tf$ under the following assumption.

\beg{enumerate} \item[{\bf (B)}]   Let $k\in (1,\infty)$ and let  $b$   in \eqref{BB}.
\item[$(B_1)$] $b^{(0)}$ and $\si$  satisfy the corresponding conditions in {\bf (A)}.
\item[$(B_2)$] For any $t\in[0,T], y\in\R^d$, $b_t^{(1)}(y,\cdot)$ is extrinsically differentiable in $\scr P_k$ with the extrinsic derivative $D^E b_t^{(1)}(y,\nu)(z)$ being continuous in $(y,\nu,z)\in\R^d\times \scr P_k \times\R^d$. Moreover, there exists $\aa\in\scr A$ with $\alpha\leq c_0(1+|\cdot|^{k-1})$ for some $c_0>0$ such that
\begin{align*}& |D^E b_t^{(1)}(y,\nu)(z)-D^E b_t^{(1)}(y,\nu)(\bar{z})|\leq \aa(|z-\bar{z}|),\\ &\qquad\quad z,\bar{z}\in\R^d,t\in[0,T], y\in\R^d,\nu\in\scr P_k.
\end{align*}
\item[$(B_3)$] For any $t\in[0,T]$, $\nu\in\scr P_k$, $b_t^{(1)}(\cdot,\nu)$ is differentiable and there exists a constant $\tilde{K}>0$   such that
 \beg{align*}&|b_t^{(1)}(0, \delta_0)|\le \tilde{K},\ \ |\nn b_t^{(1)}(y,\nu)| \le \tilde{K}, \ \ (t,y,\nu)\in [0,T]\times  \R^d\times\scr P_k.\end{align*}
  \end{enumerate}

 As indicated in Introduction that existing results on Bismut type formulas for the intrinsic derivative
 of $P_tf(\mu)$ are established under upper bound conditions on the $L$-derivative of $b_t(y,\nu)$ in $\nu$.
 Noting that under a mild condition, the $L$-derivative equals to the gradient of the extrinsic derivative,
  so the above condition on the $\aa$-continuity of $D^E b_t^{(1)}(y,\nu)(z)$ in $z$ is much weaker.
 To see this, we present below a simple example.

\begin{exa} Let $\aa(s)=s^\vv$ for some $\vv\in (0,1\wedge (k-1))$. Let $g: \R^d\times\R^d\to\R^d$ satisfy
$$|g(y,z)-g(y,\bar{z})|\leq \alpha(|z-\bar{z}|),\ \ |\nabla g(\cdot,z)|\leq K,\ \ y,z,\bar{z}\in\R^d$$
 for some $K\geq 0$.
Let $b_t^{(1)}(y,\nu)=\int_{\R^d}g(y,z)\nu(\d z)$. By Definition \ref{exd}, it holds $$D^Eb_t^{(1)}(y,\nu)(z)=g(y,z)-\int_{\R^d}g(y,z)\nu(\d z),\ \ y\in\R^d,\nu\in\scr P_k, z\in\R^d.$$
However, by Definition \ref{ind}, $b_t^{(1)}(y,\nu)$ is not intrinsically differentiable in $\nu$. In fact, since $g(y,z)$ is not differentiable in $z$, for any $y\in\R^d$, $\nu\in\scr P_k$, $\phi\in L^k(\R^d\to\R^d;\nu) $, the limit
$$\lim_{r\downarrow 0}\frac{\int_{\R^d}g(y,z+r \phi(z))\nu(\d z)-\int_{\R^d}g(y,z)\nu(\d z)}{r}$$
does not exist.
Moreover, it holds
\begin{align*}&|D^Eb_t^{(1)}(y,\nu)(z)-D^Eb_t^{(1)}(\bar{y},\bar{\nu})(\bar{z})|\\
&=|g(y,z)-g(\bar{y},\bar{z})|+\left|\int_{\R^d}g(y,z)\nu(\d z)-\int_{\R^d}g(\bar{y},z)\bar{\nu}(\d z)\right|\\
&\leq \alpha(|z-\bar{z}|)+2K|y-\bar{y}|+\W_\alpha(\nu,\bar{\nu}),\ \ y,\bar{y}\in\R^d,\nu,\bar{\nu}\in\scr P_k, z,\bar{z}\in\R^d.
\end{align*}
Note that Jensen's inequality implies that
$$\W_\alpha(\mu,\nu)\leq \inf_{\pi\in\C(\mu,\nu)}\int_{\R^d\times\R^d}\alpha(|x-y|)\pi(\d x, \d y)\leq \alpha(\W_1(\mu,\nu))\leq \alpha(\W_k(\mu,\nu)),\ \ \mu,\nu\in\scr P_k.$$
So, $D^E b_t^{(1)}(y,\nu)(z)$ is continuous in $(y,\nu,z)\in\R^d\times \scr P_k \times\R^d$. Finally, by the dominated convergence theorem, we have
$$\nn b_t^{(1)}(\cdot,\nu)=\int_{\R^d}\nabla g(\cdot,z)\nu(\d z),\ \ \nu\in\scr P_k.$$
Therefore, $b^{(1)}$ satisfies $(B_2)$-$(B_3)$.
\end{exa}
  Since {\bf (B)} implies  {\bf (A)},    as explained before that under this assumption  \eqref{E0}  is well-posed for distributions in $\scr P_k.$

For $\mu\in \scr P_k$,
 consider the decoupled SDE
\beq\label{DSo'}\begin{split} &\d X_t^{x,\mu}= \big\{b_t^{(0)}(X_t^{x,\mu})+ b_t^{(1)}(X_t^{x,\mu},  P_t^*\mu) \big\}\d t+ \si_t(X_t^{x,\mu})\d W_t,\\
&\qquad\qquad\qquad X_0^{x,\mu}=x, \ \ t\in [0,T].
\end{split}\end{equation}
Let
$$\B_{k,b}(\R^d):=\bigg\{f:\ \ff{f}{1+|\cdot|^k}\in\B_b(\R^d)\bigg\}.$$
We first give a lemma on Bismut formula of $P_t^\mu f$ for $f\in\B_{k,b}(\R^d)$.

\begin{lem}\label{poa} Let $\si$ and $b$ satisfy  {\bf (B)}. Then
 for any  $v\in \R^d, \gamma\in \scr P_k, x\in\R^d$, the limit
\beg{align*} &\nn_v X_t^{x,\gamma}:=\lim_{\vv\downarrow 0} \ff{X_t^{x+\varepsilon v, \gamma}-X_t^{x,\gamma}}\vv,\ \
 \  \ t\in [0,T]\end{align*}
exists in $L^p(\OO\to C([0,T];\R^d);\P)$ for any $p\ge 1$, and  there exists a constant $c_p>0$ such that
\beg{equation}\label{EX}  \E\Big[\sup_{t\in [0,T]} |\nn_v X_t^{x,\gamma}|^p\Big]\le c_p |v|^p,\ \ v\in \R^d, \gamma\in \scr P_k, x\in\R^d. \end{equation}
Moreover, the Bismut formula for $P^{\gamma}_t$ holds:
\beq\label{BS00}\beg{split}&\nabla_v P^{\gamma}_tf (x)= \mathbb{E}\left[ f(X^{x,\gamma}_t)\int_0^t\frac{1}{t}\big\<\zeta_s  (X^{x,\gamma}_s) \nn_{v}  X^{x,\gamma}_s,\d W_s\big\>\right],\\
&\qquad
     \zeta_s:= \si_s^*(\si_s\si_s^*)^{-1},\ \ \ f\in \B_{k,b}(\R^d), \ x,v\in\R^d,\gamma\in\scr P_k,t\in(0,T].\end{split}\end{equation}
\end{lem}

\begin{proof} By \cite[Theorem 2.1]{FYW3} for $\beta_s=\frac{s}{t}$, {\bf (B)} implies   \eqref{EX} and \eqref{BS00} for $f\in \B_b(\R^d)$. To deduce \eqref{BS00}  for any $f\in \B_{b,k}(\R^d),$ let
$$f_n:= [(-n)\lor f]\land n,\ \ n\ge 1.$$
By \eqref{ES00'}, \eqref{EX} and the boundedness of $\zeta_s$, we find  constants $c_0,c_1(\gg)>0$  such that
\beq\label{C00} \beg{split} &\E\bigg[\big(1+|X_t^{x+r v,\gg}|^k\big)\bigg|\int_0^t \big\<\zeta_s  (X^{x+rv,\gamma}_s) \nn_{v}  X^{x+rv,\gamma}_s,\d W_s\big\>\bigg|\bigg]\\
&\le c_0\ss t  \Big(\E\big[2+|X_t^{x+r v,\gg}|^{2k}\big]\Big)^{\ff 1 2} \Big(\E\big[\sup_{s\in[0,T]}\big|\nn_v X^{x+rv,\gg}_s\big|^2\big]\Big)^{\ff 1 2}\\
&\le c_1(\gg)\ss t |v|\big(1+  |x|^k+|v|^k\big),\ \ t\in (0,T], x,v\in\R^d, r\in[0,1].\end{split}\end{equation}
By \eqref{BS00} for $f_n$ in place of $f$, we obtain
\beg{align*}&\ff{ P^{\gamma}_tf_n (x+\vv v)- P^{\gamma}_tf_n (x)}\vv \\
&= \ff 1 \vv \int_0^\vv
\mathbb{E}\left[ f_n(X^{x+ rv,\gamma}_t)\int_0^t\frac{1}{t}\big\<\zeta_s  (X^{x+rv,\gamma}_s) \nn_{v}  X^{x+rv,\gamma}_s,\d W_s\big\>\right]\d r.\end{align*}
Since $f\in \B_{k,b}(\R^d)$, by \eqref{ES00'}, \eqref{EX} and \eqref{C00}, we may apply the dominated convergence theorem such that the above formula with $n\to\infty$ implies
 \beq\label{BS11}\beg{split}&\ff{ P^{\gamma}_tf (x+\vv v)- P^{\gamma}_tf (x)}\vv \\
&= \ff 1 \vv \int_0^\vv
\mathbb{E}\left[ f(X^{x+ rv,\gamma}_t)\int_0^t\frac{1}{t}\big\<\zeta_s  (X^{x+rv,\gamma}_s) \nn_{v}  X^{x+rv,\gamma}_s,\d W_s\big\>\right]\d r,\\
&\qquad f\in \B_{k,b}(\R^d), \vv>0, \ x,v\in\R^d,\gamma\in\scr P_k,t\in(0,T].\end{split}\end{equation}
Note that \eqref{BS00} for $f_n$ in place of $f$ yields
 \beg{align*}&\lim_{n\to\infty} \limsup_{\vv\to 0}\bigg| \ff{ P^{\gamma}_tf_n (x+\vv v)- P^{\gamma}_tf_n (x)}\vv-\mathbb{E}\left[ f(X^{x,\gamma}_t)\int_0^t\frac{1}{t}\big\<\zeta_s  (X^{x,\gamma}_s) \nn_{v}  X^{x,\gamma}_s,\d W_s\big\>\right]\bigg| \\
&=\lim_{n\to\infty}\bigg|\mathbb{E}\left[ f_n(X^{x,\gamma}_t)\int_0^t\frac{1}{t}\big\<\zeta_s  (X^{x,\gamma}_s) \nn_{v}  X^{x,\gamma}_s,\d W_s\big\>\right]\\
&\qquad\qquad\quad-\mathbb{E}\left[ f(X^{x,\gamma}_t)\int_0^t\frac{1}{t}\big\<\zeta_s  (X^{x,\gamma}_s) \nn_{v}  X^{x,\gamma}_s,\d W_s\big\>\right]\bigg|=0,\end{align*}
where the last step follows from  the dominated convergence theorem due to $f\in \B_{k,b}(\R^d)$ and \eqref{C00}. This together with \eqref{BS11} for $f-f_n$ in place of $f$ implies
 \beq\label{ER0} \beg{split}&\limsup_{\vv\to 0}\bigg|\ff{ P^{\gamma}_tf (x+\vv v)- P^{\gamma}_tf (x)}\vv-
\mathbb{E}\left[ f(X^{x,\gamma}_t)\int_0^t\frac{1}{t}\big\<\zeta_s  (X^{x,\gamma}_s) \nn_{v}  X^{x,\gamma}_s,\d W_s\big\>\right]\bigg|\\
&\leq \lim_{n\to\infty}\limsup_{\vv\to 0}\bigg|\ff{ P^{\gamma}_t(f-f_n) (x+\vv v)- P^{\gamma}_t(f-f_n) (x)}\vv
\bigg|\\
&+ \lim_{n\to\infty}\limsup_{\vv\to 0}\bigg|\ff{ P^{\gamma}_tf_n (x+\vv v)- P^{\gamma}_tf_n (x)}\vv\\
&\qquad\qquad\qquad\qquad\quad-\mathbb{E}\left[ f(X^{x,\gamma}_t)\int_0^t\frac{1}{t}\big\<\zeta_s  (X^{x,\gamma}_s) \nn_{v}  X^{x,\gamma}_s,\d W_s\big\>\right]\bigg|\\
&\leq\lim_{n\to\infty} \limsup_{\vv\to 0} \ff 1 {t\vv}\int_0^\vv \E \bigg| (f_n-f)(X_t^{x+rv,\gg})\\
&\qquad\qquad\qquad\qquad\qquad\quad\times    \int_0^t
\big\<\zeta_s  (X^{x+rv,\gamma}_s) \nn_{v}  X^{x+rv,\gamma}_s,\d W_s\big\>\bigg|\d r.
 \end{split} \end{equation}
  Since $f\in \B_{k,b}(\R^d)$ implies
  $$|(f_n-f)(x)|\le c (1+|x|^k) 1_{\{c(1+|x|^k)\ge n\}},\ \ n\ge 1$$ for some constant $c>0$,
 by the same reason leading to \eqref{C00}, we find constants $\tilde{c}, c_2(\gg)>0$ such that
\beg{align*} &\sup_{r\in [0,1]} \E \bigg| (f_n-f)(X_t^{x+rv,\gg})   \int_0^t
\big\<\zeta_s  (X^{x+rv,\gamma}_s) \nn_{v}  X^{x+rv,\gamma}_s,\d W_s\big\>\bigg|\\
&\le \tilde{c}\ss t |v| \sup_{r\in [0,1]}\Big(\E \big[1+|X_t^{x+rv,\gg}|^{4k}\big]\Big)^{\ff 1 2} n^{-1}\\
&\le c_2(\gg)\ss t |v| \big(1+|x|^{2k}+|v|^{2k}\big)n^{-1}.\end{align*}
 Therefore, \eqref{BS00} follows form \eqref{ER0}.
\end{proof}

 To state the Bismut formula for $P_tf$, we introduce the quantity $I_t^f$:
for fixed  $t\in (0,T]$,  let
\beq\label{ZI}  \beg{split}
   I_t^{f}(\mu,\phi)&:=
 \ff 1 t   \int_{\R^d} \E\bigg[f(X_t^{x,\mu}) \int_0^t \big\<\zeta_s(X_s^{x,\mu} ) \nn_{\phi (x)}X_s^{x,\mu},  \d W_s\big\> \bigg]\mu(\d x),\\
     & s\in [0,t],\mu\in \scr P_k, \phi\in T_{\mu,k},f\in \B_{k-1,b}(\R^d).\end{split} \end{equation}
  By  {\bf(B)} and \eqref{EX}, we find a constant $c>0$ such that
\beq\label{BTT}  |I_t^f(\mu,\phi)|\le   \ff c{\ss t}  \big(P_t|f|^{k^*}(\mu)\big)^{\ff 1 {k^*}} \|\phi\|_{L^k(\mu)},\ \ \mu\in \scr P_k, \phi\in T_{\mu,k}, f\in \B_{k-1,b}(\R^d).\end{equation}

Next, let   $X_0^\mu$ be $\F_0$-measurable such that $\L_{X_0^\mu}=\mu$, and let $X_t^\mu$ solve \eqref{E0} with initial value $X_0^\mu$. For any $\vv\ge 0$, denote
$$\mu_\vv:= \mu\circ(id+\vv\phi)^{-1},\ \ X_0^{\mu_\vv}:= X_0^\mu+\vv\phi(X_0^\mu).$$
Let $X_t^{\mu_\vv}$ solve \eqref{E0} with initial value $X_0^{\mu_\vv}$. So,
$$X_t^{\mu}=X_t^{\mu_0},\ \ P_t^*\mu_\vv=\L_{X_t^{\mu_\vv}},\ \ t\in [0,T], \vv\ge 0.$$
Now, we present the main result of this part.
\beg{thm}\label{TA2'} Assume   {\bf(B)} and let $\zeta_s$ and $I_t^f$ be in \eqref{BS00} and $\eqref{ZI}$ respectively. Then   the following assertions hold.
 \beg{enumerate}  \item[$(1)$]   For any $t\in (0,T]$, $y\in\R^d$, $\nu\in\scr P_k$,  $P_t[D^E b_t^{(1)}(y,\nu)(\cdot)](\mu)$ is intrinsically differentiable on $\mu\in\scr P_k$, and there exists a constant $c>0$ such that
\begin{align*}   \sup_{(y,\nu)\in\R^d\times \scr P_k}\|D^IP_t[D^E b_t^{(1)}(y,\nu)(\cdot)](\mu)\|_{L^{\frac{k}{k-1}}(\mu)} \le \ff{c\,\aa(t^{\ff 1 2})}{\ss t},\ \ t\in (0,T],\mu\in\scr P_k.\end{align*}
\item[$(2)$] For any $t\in (0,T]$ and $f\in \B_{k-1,b}(\R^d)$, $P_t f$ is intrinsically differentiable on $\scr P_k$. Moreover, for any $\mu\in \scr P_k$ and $\phi\in T_{\mu,k}$,
\beq\label{BSMI}   \beg{split} D_\phi^I  P_tf(\mu)&= I_t^f(\mu,\phi)\\
&+ \int_{\R^d}\E\bigg[f(X_t^{x,\mu})\int_0^t \Big\<\zeta_s (X_s^{x,\mu}) N_s(\mu,\phi),\ \d W_s\Big\>\bigg]\mu(\d x),\\
&N_s(\mu,\phi):=  \big\{D_\phi^I P_s [D^Eb_s^{(1)}(y, \nu)(\cdot)](\mu) \big\}|_{y=X_s^{x,\mu},\nu=P_s^\ast\mu},  \end{split}
  \end{equation}

  where $X_t^{x,\mu}$ solves $\eqref{DSo'}$ with initial value $x\in\R^d$.
  \end{enumerate} \end{thm}
By \eqref{BTT}-\eqref{BSMI},  we find  a constant $c>0$ such that
 \beg{align*}   \|D^I P_tf(\mu)\|_{L^{k^*}(\mu)} \le c\ff{\{P_t|f|^{k^*}(\mu)\}^{\ff 1 {k^*}}}{\ss t},\ \
  t\in (0,T],   f\in \B_{k-1,b}(\R^d),  \mu\in \scr P_k.\end{align*}

To explain the main idea in the proof of Theorem \ref{TA2'},    we first figure out a sketch. By the definition of the intrinsic derivative, we intend to calculate for any $f\in \B_{k-1,b}(\R^d)$,
\beq\label{D1} D_\phi^I P_t f(\mu):=\lim_{\vv\downarrow 0}\ff{P_t f(\mu_\vv)-P_tf(\mu)}\vv= \lim_{\vv\downarrow 0}\ff{\E[ f(X_t^{\mu_\vv})-f(X_t^\mu)]}\vv.\end{equation}
To this end, for any $\mu\in \scr P_k, x\in\R^d$,  recall that $X_t^{x,\mu}$ solves the decoupled SDE \eqref{DSo'},
and
\beg{align*} &P_t^\mu f(x)=\E[f(X_t^{x,\mu})],\ \ \ x\in\R^d.
\end{align*}
Define
\begin{align*}P_t^\mu f(\tt\mu):= \int_{\R^d} P_t^\mu f\d\tt\mu,\ \ t\ge 0, f\in\B_{k-1,b}(\R^d), \mu,\tt\mu\in \scr P_k.\end{align*}
For $\vv\ge 0$, let
 $X_t^{\mu_\vv,\mu} $ be the solution of \eqref{DSo'} with initial value
 $X_0^\vv$, i.e,
 \begin{equation*} \beg{split} \d X_t^{\mu_\vv,\mu}=\big\{b_t^{(0)}(X_t^{\mu_\vv,\mu}) + &b_t^{(1)}\big(X_t^{\mu_\vv,\mu},P_t^\ast\mu\big)\big\}\d t
   +\si_t(X_t^{\mu_\vv,\mu})\d W_t,\\
  &  t\in [0,T], \ X_0^{\mu_\vv,\mu}=X_0^\vv.\end{split}\end{equation*}  Then $X_t^{\mu_\vv,\mu_\vv}$ solves \eqref{E0} with initial value $X_0^\vv$, so that
 \begin{equation*}P_t f(\mu_\vv)= P_t^{\mu_\vv} f(\mu_\vv)= \E[f(X_t^{\mu_\vv,\mu_\vv})],\ \ \vv\ge 0, t\in [0,T], f\in \B_{k-1,b}(\R^d).\end{equation*}
Noting that $\mu_0=\mu$,  \eqref{D1} reduces to
\beq\label{BS0} \beg{split} &D_\phi^I P_t f(\mu)=\lim_{\vv\downarrow 0}\ff{P_t^{\mu_\vv} f(\mu_\vv)- P_t^\mu f(\mu)}\vv\\
&= \lim_{\vv\downarrow 0}\Big\{\ff{P_t^{\mu} f(\mu_\vv)- P_t^\mu f(\mu)}\vv+\ff{P_t^{\mu_\vv}f(\mu_\vv)- P_t^{\mu}f(\mu_\vv)}\vv\Big\}.\end{split}\end{equation}
So, to calculate $D_\phi^IP_tf(\mu),$
we only need to study the limits of
$$J_1f(t,\vv):=\ff{P_t^{\mu} f(\mu_\vv)- P_t^\mu f(\mu)}\vv,\ \   J_2f(t,\vv):= \ff{P_t^{\mu_\vv}f(\mu_\vv)- P_t^{\mu}f(\mu_\vv)}\vv.$$

By Lemma \ref{poa},   for any $t\in (0,T],\vv\ge 0$ and $f\in \B_{k-1,b}(\R^d)$, we have
\begin{equation*}  \beg{split} & \ff{\d}{\d\vv} P_t^{\mu}f(\mu_\vv):=  \lim_{r\downarrow 0} \ff{P_t^\mu f(\mu_{\vv+r})- P_t^{\mu}f(\mu_\varepsilon)}r\\
&=   \int_{\R^d} \E\bigg[f(X_t^{x+\vv\phi(x),\mu}) \frac{1}{t}\int_0^t\big\<\zeta_s(X_s^{x+\vv\phi(x),\mu} ) \nn_{\phi (x)}X_s^{x+\vv\phi(x),\mu},  \d W_s\big\> \bigg]\mu(\d x).\end{split}\end{equation*}
In particular,
\beq\label{BS1'} \lim_{\vv\downarrow 0} J_1f(t,\vv)= \lim_{\vv\downarrow 0} \ff{P_t^{\mu}f(\mu_\vv)- P_t^{\mu}f(\mu)}\vv=I_t^f(\mu,\phi),\ \ t\in (0,T].\end{equation}
 Consequently,  it remains to prove
$$\lim_{\vv\to0} J_2f(t,\vv)= \int_{\R^d}\E\bigg[f(X_t^{x,\mu})\int_0^t \Big\<\zeta_s (X_s^{x,\mu}) N_s(\mu,\phi),\ \d W_s\Big\>\bigg]\mu(\d x)$$
for $N_s(\mu,\phi)$ defined in \eqref{BSMI}, which involves in $D_\phi^I \{P_s[ D^Eb_s^{(1)}(y,\nu)(\cdot)](\mu)\}$. Therefore, we will first study $D_\phi^I \{P_s[ D^Eb_s^{(1)}(y,\nu)(\cdot)](\mu)\}$.

 Recall that $\tilde{\alpha}$ is defined in \eqref{tal}. For any $\v\in \scr B_b([0,T]\times \R^d\times\scr P_k;\R^d)$, the set of bounded and measurable $\R^d$-valued functions on $[0,T]\times \R^d\times\scr P_k$ and $t\in[0,T], y\in\R^d,\nu\in\scr P_k$, we write
\begin{equation}\label{MYt} \beg{split}
\tt I_t^{\v}(y,\nu)&:=     \int_{\R^d}\E\bigg[D^Eb_t^{(1)}(y, \nu)(X_t^{x,\mu}) \\
&\qquad\qquad\quad \times\int_0^t  \ff{ \aa(s^{\ff 1 2})}{  \{\tt\aa(s^{\ff 1 2}) s\}^{\ff 1 2}}\Big\<\zeta_s(X_s^{x,\mu}) \v_s(X_s^{x,\mu},P_s^\ast\mu),\ \d W_s\Big\>\bigg]\mu(\d x).
 \end{split}\end{equation}
By $(B_2)$, for any $t\in[0,T], y\in\R^d,\nu\in\scr P_k$, we have
\begin{align*}|D^Eb_t^{(1)}(y, \nu)(\cdot)|&\leq \alpha(|\cdot|)+|D^Eb_t^{(1)}(y, \nu)(0)|\\
&\leq c_0(1+|\cdot|^{k-1})+|D^Eb_t^{(1)}(y, \nu)(0)|.
\end{align*}
So, $I_t^f(\mu,\phi)$ for   $f=D^Eb_t^{(1)}(y, \nu)(\cdot)$ in \eqref{ZI} is well-defined, and we denote
\beq\label{IVa}  \beg{split}
   I_t^{\mu,\phi}(y,\nu)&:=
 \ff 1 t   \int_{\R^d} \E\bigg[D^Eb_t^{(1)}(y, \nu)(X_t^{x,\mu})\\
  &\qquad\quad\times \int_0^t \big\<\zeta_s(X_s^{x,\mu} ) \nn_{\phi (x)}X_s^{x,\mu},  \d W_s\big\> \bigg]\mu(\d x).\end{split} \end{equation}
Consider  the following   equation for $\v\in \scr B_b([0,T]\times \R^d\times\scr P_k;\R^d)$:
\beq\label{VP}\begin{split}
& \v_t(y,\nu)   =  {\ff{\{t\tt\aa(t^{\ff 1 2})\}^{\ff 1 2}}{\aa(t^{\ff 1 2})}}  \big\{I_t^{\mu,\phi}(y,\nu)+ \tt I_t^{\v}(y,\nu)\big\},\ \ t\in [0,T],y\in\R^d,\nu\in\scr P_k.
\end{split}\end{equation} If this equation has a unique solution, we denote it by $\v_t(y,\nu)=v_t^{\mu,\phi}(y,\nu)$ for $(t,y,\nu)\in [0,T]\times\R^d\times\scr P_k$, to emphasize the dependence on $\mu$ and $\phi$.

In the following two subsections, we  prove the  well-posedness of   \eqref{VP} and establish the formula
\beq\label{00}\begin{split} &D_\phi^I \{P_t [D^Eb_t^{(1)}(y,\nu)(\cdot)]\}(\mu)= {\ff{\aa(t^{\ff 1 2})}{ \{t \tt\aa(t^{\ff 1 2})\}^{\ff 1 2} }}\,v_t^{\mu,\phi}(y,\nu),\ \ t\in(0,T],y\in\R^d,\nu\in\scr P_k.
\end{split}\end{equation}
\ \subsection{Well-posedness of \eqref{VP} }

\beg{lem}\label{LL20} Assume {\bf (B)}.  For any $\mu\in \scr P_k$ and $\phi\in T_{\mu,k}$,  the equation $\eqref{VP} $ has a unique solution, which is denoted by $\{v_t^{\mu,\phi}(y,\nu)\}_{t\in [0,T],y\in\R^d,\nu\in\scr P_k}$, and there exists a constant $c>0$   such that
 \beq\label{EST0} \sup_{\|\phi\|_{L^k(\mu)}\le 1} \sup_{y\in\R^d,\nu\in\scr P_k}|v_t^{\mu,\phi}(y,\nu)|\le    {c\,\ss{\tt\aa( t^{\ff 1 2})}},\ \ \mu\in \scr P_k, t\in [0,T].
 \end{equation}
\end{lem}

\beg{proof}
Let
$$\scr V_0:=  \big\{\v\in \scr B_b([0,T]\times \R^d\times\scr P_k;\R^d):\ \v_0=0\big\},$$ which is a Banach space under the uniform norm.
For $\v\in \scr V_0,$ let
$$\|\v_t\|_\infty=\sup_{y\in\R^d,\nu\in\scr P_k}|\v_t(y,\nu)|,\ \ t\in [0,T] $$ and for any $ t\in[0,T], y\in \R^d,\nu\in\scr P_k$, let
\beq\label{HT} \beg{split} & \{H(\v)\}_t(y,\nu):=  { \ff{\{t\tt\aa(t^{\ff 1 2})\}^{\ff 1 2}}{\aa(t^{\ff 1 2})}}  \big\{I_t^{\mu,\phi}(y,\nu)+ \tt I_t^{\v}(y,\nu)\big\}.
\end{split} \end{equation}
Then  it suffices  to prove
\beg{enumerate} \item[(i)]  The map
$H:\ \scr V_0\to \scr V_0$
 is well-defined and  has a unique fixed point $v^{\mu,\phi}$ which turns out to be the unique solution of $\eqref{VP} $.
 \item[(ii)]  {There exists a constant $c>0$ such that
 \beg{align*}   \sup_{\|\phi\|_{L^k(\mu)}\le 1} \|v_t^{\mu,\phi}\|_\infty\le c  {\,\ss{\tt\aa( t^{\ff 1 2})}},\ \ (t,\mu)\in [0,T]\times\scr P_k.
 \end{align*}
 }
\end{enumerate}
 Next, we will prove (i) and (ii) one by one.

(1) Proof of (i).

(a) We first verify
\beq\label{BHH} H: \scr V_0\to \scr V_0.\end{equation}

Recall that $\theta^{\lambda,\gamma}$ and $\tilde{\theta}^{\lambda,\gamma}$ are defined in \eqref{acd}-\eqref{thv1}.
Since {\bf (B)} implies {\bf(A)}, we conclude that \eqref{soi} still holds.

 By $[D^E b_t^{(1)}(y,\nu)(\cdot)]_\aa\le 1$ due to $(B_2)$,  \eqref{aas} in Lemma \ref{LN} for $p=2$ and   { $z=\tilde{\theta}^{\lambda,\mu}_t(x)$, \eqref{EX}, \eqref{soi}, \eqref{AA} and \eqref{IVa},  we find a constant $c_1>0$ such that}
\beq\label{IT1} \begin{split} |I_t^{\mu,\phi}(y,\nu)|
&\leq \frac{c_1}{\sqrt{t}}\int_{\R^d} {\alpha\big(t^{\frac{1}{2}}\big)|\phi(x)|\mu(\d x)}\\
& {\leq \frac{c_1}{\sqrt{t}}\alpha\big(t^{\frac{1}{2}}\big)\|\phi\|_{L^k(\mu)},\ \ t\in (0,T], \mu\in\scr P_k, \phi\in T_{\mu,k}}, y\in\R^d,\nu\in\scr P_k.
\end{split}\end{equation}

So,     by \eqref{MYt}, $(B_2)$, \eqref{aas} in Lemma \ref{LN} for $p=2$ and   { $z=\tilde{\theta}^{\lambda,\mu}_t(x)$}, \eqref{AA} and \eqref{soi}, we find a constant $c_2>0$ such that
\beq\label{tid}\begin{split}
   & |\tt I_t^{\v}(y,\nu)|  \le c_2    {\alpha\big(t^{\frac{1}{2}}\big)}\bigg(\int_0^t  {\ff{\aa(s^{\ff 1 2})^2}{s\tt\aa(s^{\ff 1 2})}}\|\v_s\|^2_\infty\d s\bigg)^{\ff 1 2},\ \ y\in\R^d,\nu\in\scr P_k.
\end{split}\end{equation}
 Combining this with \eqref{HT} and \eqref{IT1}, we find  a constant $c_3>0$     such that
  {\beq\label{UPP}\beg{split} \|\{H(\v)\}_t\|_\infty\le  &\,c_3    \|\phi\|_{L^k(\mu)}  \ss{\tt\aa\big(t^{\ff 1 2}\big)} +  c_3    \ss{t \tt \aa\big(t^{\ff 1 2}\big)}  \bigg(  \int_0^t  \ff{\aa(s^{\ff 1 2})^2}{s\tt\aa\big(s^{\ff 1 2}\big)}\|\v_s\|^2_\infty\d s\bigg)^{\ff 1 2}.\end{split}\end{equation}}
Then  \eqref{BHH} follows from the fact that  \eqref{tal} implies
\beq\label{C1} \int_0^t \ff{\aa(r s^{\ff 1 2})^2}{s\tt\aa(r s^{\ff 1 2})} \d s= 2\int_0^{rt^{\ff 1 2}}  \ff{\aa(s)^2}{s\tt\aa(s)}\d s= 4 \int_0^{r t^{\ff 1 2}} \tt\aa'(s)\d s=4 \tt\aa(rt^{\ff 12}),\ \ r\ge 0.\end{equation}

(b) We intend to prove that $H$   in (i) has a unique fixed point in $\scr V_0.$ Obviously, for any $\delta>0$, $\scr V_0$ is complete under the metric
\beg{align*} &\rr_{\delta}\big(\v, U\big):= \sup_{t\in [0,T]} \e^{-\delta t} \|\v_t-U_t\|_{\infty},\ \ \v,U\in \scr V_0.\end{align*}
So, it suffices to prove the contraction of
$H$ in $\rr_{\delta}$ for large enough $\delta>0$.

By \eqref{HT}, \eqref{IT1} and \eqref{tid}, we find a constant $c_4>0$ such that
 \beg{align*}&|\{H(\v)\}_t(y,\nu)-\{H(U)\}_t(y,\nu)| =  \ff{\{t\tt\aa\big(t^{\ff 1 2}\big)\}^{\ff 1 2}}{\aa\big(t^{\ff 1 2}\big)}  | \tt I_t^{\v-U}(y,\nu)|\\
& \le c_4   \bigg( \int_0^t  \ff{\aa(s^{\ff 1 2})^2}{s\tt\aa(s^{\ff 1 2})}\|\v_s-U_s\|^2_\infty\d s\bigg)^{\ff 1 2},\ \ V,U\in \scr V_0, t\in [0,T].\end{align*}
Combining this with \eqref{C1}, we conclude that $H$  is contractive in the complete metric space $(\scr V_0, \rr_{\delta})$
 for large enough $\delta>0$, and hence  has a unique fixed point denoted by $v^{\mu,\phi}$.

(2) Proof of (ii). By \eqref{UPP}  and noting that   $H(v^{\mu,\phi})=v^{\mu,\phi}$, we derive
\beg{align*} &\|v_t^{\mu,\phi}\|_\infty^2\le   2c_3^2  \tt\aa\big(t^{\ff 1 2}\big) \|\phi\|_{L^k(\mu)}^2  +  2c_3^2   t \tt \aa\big(t^{\ff 1 2}\big)\int_0^t  \ff{\aa(s^{\ff 1 2})^2}{s\tt\aa(s^{\ff 1 2})}\|v_s^{\mu,\phi}\|^2_\infty\d s,\ \ t\in [0,T].\end{align*}
Combining this with \eqref{C1} and  Gronwall's inequality, we find a constant $c_5>0$ such that for any $t\in [0,T]$,
\beg{align*} &   \|v_t^{\mu,\phi}\|_{\infty}\le  c_5\|\phi\|_{L^k(\mu)} \,\ss{\tt\aa( t^{\ff 1 2})},\ \ \mu\in \scr P_k, \ \phi\in T_{\mu,k}.\end{align*}
 This proves (ii).
\end{proof}
\subsection{ {Proof of Theorem \ref{TA2'}}}
By Lemma \ref{LL20},   the proof of Theorem \ref{TA2'}(1) is completed by the following lemma.
\beg{lem}\label{LL2} Assume {\bf (B)}. Then for any    $\mu\in \scr P_k$, $\phi\in T_{\mu,k}$, the function $h:(0,T]\times\R^d\times \scr P_k\to \R^d$ defined by $$h_t(y,\nu):=
D_\phi^I\big\{ P_t [D^Eb_t^{(1)}(y, \nu)(\cdot)](\mu) \big\},\ \ t\in(0,T], y\in\R^d, \nu\in\scr P_k $$
 exists in $\scr B((0,T]\times\R^d\times \scr P_k; \R^d)$ such that $\eqref{00}$ holds. Consequently,
  there exists  a constant $c>0$ such that for any $ \mu\in \scr P_k$,
\begin{align*}\sup_{\|\phi\|_{L^k(\mu)}\le 1} \bigg\{\sup_{t\in (0,T]}\ff{\ss t}{\aa(t^{\ff 1 2})}\sup_{y\in \R^d,\nu\in\scr P_k}|D_\phi^I\big\{ P_t [D^Eb_t^{(1)}(y, \nu)(\cdot)](\mu) \big\}| \bigg\}
 \le c.
 \end{align*}
 \end{lem}

\beg{proof} (a) By Lemma \ref{LL20}, it suffices to prove \eqref{00}. For simplicity,
for any $t\in[0,T], y\in\R^d,\nu\in\scr P_k$, let
\beq\label{VW}U_t(y,\nu,z):=D^E b_t^{(1)}(y, \nu)(z), \ \ z\in\R^d.\end{equation}
Moreover, simply denote
\beg{align*} &v_t^{\vv}(y,\nu):=v_t^{\vv,1}(y,\nu)+ v_t^{\vv,2}(y,\nu),\\
&v_t^{\vv,1}(y,\nu):=\ff{P_t^{\mu}U_t(y,\nu,\cdot)(\mu_\vv)- P_t^{\mu}U_t(y,\nu,\cdot)(\mu)}\vv,\\
&v_t^{\vv,2}(y,\nu):=  \ff{P_t^{\mu_\vv}U_t(y,\nu,\cdot)(\mu_\vv)- P_t^{\mu}U_t(y,\nu,\cdot)(\mu_\vv)}\vv. \end{align*}
Next, for $v_t^{\mu,\phi}$  in Lemma \ref{LL20}, let
\beq\label{W1'} \beg{split}&\hat v_t(y,\nu):=  {\ff{\aa(t^{\ff 1 2})}{ \{\tt\aa(t^{\ff 1 2})  t\}^{\ff 1 2} }}v_t^{\mu,\phi}(y,\nu),\ \ t\in(0,T],y\in\R^d,\nu\in\scr P_k,
\end{split} \end{equation}
and
\begin{align}\label{v12}
\hat v_t^1(y,\nu):=I_t^{U_t(y,\nu,\cdot)}(\mu,\phi),\ \ \hat v_t^2(y,\nu):= \hat v_t(y,\nu)- I_t^{U_t(y,\nu,\cdot)}(\mu,\phi).
\end{align}
Noting that
\begin{align}\label{ULT}
\nonumber&P_t^\mu U_t(y,\nu,\cdot)(\mu_\vv)- P_t^\mu U_t(y,\nu,\cdot)(\mu)\\
&= \int_{\R^d} \Big[P_t^\mu U_t(y,\nu,\cdot)(x+\vv\phi(x))- P_t^\mu U_t(y,\nu,\cdot)(x)\Big]\mu(\d x),
\end{align}
by \eqref{BS00} for $f=U_t(y,\nu,\cdot)$, we obtain
\beq\label{P00} \lim_{\vv\to 0} \big|v_t^{\vv,1}(y,\nu)- \hat v_t^1(y,\nu)\big|=0,\ \ t\in(0,T],y\in\R^d,\nu\in\scr P_k.\end{equation}

Since  \eqref{VP} holds for $V_t=v_t^{\mu,\phi}$,     \eqref{VW}-\eqref{v12} imply
that
\beq \label{v2c} \beg{split} &\hat v_t^2(y,\nu)= \hat v_t(y,\nu)- I_t^{U_t(y,\nu,\cdot)}(\mu,\phi)\\
 & = \int_{\R^d}\E\bigg[ U_t(y,\nu,X_t^{x,\mu}) \\
 &\qquad\qquad\quad \times\int_0^t
   \Big\<\zeta_s(X_s^{x,\mu})[ \hat v_s^2(X_s^{x,\mu},P_s^\ast\mu)+\hat v_s^1(X_s^{x,\mu},P_s^\ast\mu)],  \ \d W_s\Big\>\bigg]\mu(\d x).\end{split}\end{equation}
In view of \eqref{P00}, to prove \eqref{00},  it remains to verify
\beq\label{SFF}  \lim_{\vv\to 0}\sup_{t\in (0,T]} \ff{\ss t}{\aa(t^{\ff 1 2})} | v_t^{\vv,2}(y,\nu) - \hat v_t^{2}(y,\nu)| =0,\ \   y\in\R^d,\nu\in\scr P_k.\end{equation}
  In the following, we first   estimate  $\|v_t^{\vv,i}\|_{\infty}$ and $| v_t^{\vv,i} - \hat v_t^i| (i=1,2)$ in steps (b)-(c), then verify \eqref{SFF} in step (d).

(b) Estimates on $\|v_t^{\vv,i}\|_\infty, i=1,2$ and $\|v_t^{\vv,1}-\hat v_t^1\|_{\infty}$.

 By \eqref{BS11} for
$f= U_t(y,\nu,\cdot)$ and \eqref{ULT}, we obtain
\begin{align}\label{v1a}
\nonumber&v_t^{\vv,1}(y,\nu)=\frac{P_t^\mu U_t(y,\nu,\cdot)(\mu_\vv)- P_t^\mu U_t(y,\nu,\cdot)(\mu)}{\vv}\\
\nonumber&= \frac{1}{\varepsilon}\int_0^\varepsilon \int_{\R^d}\E\bigg[\Big(U_t\big(y,\nu,X_t^{x+r \phi(x),\mu}\big)-U_t\big(y,\nu,\tilde{\theta}^{\lambda,\mu}_t(x+r \phi(x))\big)\Big)\\
&\qquad\qquad\qquad\qquad \times \int_0^t\ff 1 t \big\<\zeta_s(X_s^{x+r \phi(x),\mu})\nn_{\phi(x)} X_s^{x+r\phi(x),\mu},\d W_s\big\>\bigg]\mu(\d x)\d r\\
\nonumber&=\int_0^1 \int_{\R^d}\E\bigg[\Big(U_t\big(y,\nu,X_t^{x+\vv u \phi(x),\mu}\big)-U_t\big(y,\nu,\tilde{\theta}^{\lambda,\mu}_t(x+\vv u \phi(x))\big)\Big)\\
\nonumber&\qquad\qquad\qquad\qquad \times \int_0^t\ff 1 t \big\<\zeta_s(X_s^{x+\vv u \phi(x),\mu})\nn_{\phi(x)} X_s^{x+\vv u\phi(x),\mu},\d W_s\big\>\bigg]\mu(\d x)\d u,
\end{align}
where in the last step, we used the integral transform $r =\vv u$. Similar to \eqref{IT1},
noting that {\bf (B)} implies $[U_t(y,\nu,\cdot)]_\alpha\leq 1$, by \eqref{aas} in Lemma \ref{LN} for $p=2$ and $z=\tilde{\theta}^{\lambda,\mu}_t(x+\varepsilon r \phi(x))$, \eqref{EX}, \eqref{soi} and \eqref{AA}, we   find a constant $c(\mu,\phi)>0$ depending on $\phi,\mu$ such that
\beq\label{W0} \beg{split}&\|v_t^{\vv,1}\|_\infty =\sup_{y,\nu} \ff{|P_t^{\mu}U_t(y,\nu,\cdot)(\mu_\vv)- P_t^{\mu}U_t(y,\nu,\cdot)(\mu)|}\vv\\
&\leq \sup_{y,\nu}\int_0^1 \int_{\R^d}\bigg|\E\bigg[\Big(U_t\big(y,\nu,X_t^{x+\vv r \phi(x),\mu}\big)-U_t\big(y,\nu,\tilde{\theta}^{\lambda,\mu}_t(x+\vv r \phi(x)))\Big)\\
&\qquad\qquad\qquad\qquad \times \int_0^t\ff 1 t \big\<\zeta_s(X_s^{x+ \vv r\phi(x),\mu})\nn_{\phi(x)} X_s^{x+\vv r\phi(x),\mu},\d W_s\big\>\bigg]\bigg|\mu(\d x)\d r\\
&\le \ff{c(\mu,\phi) \aa(t^{\ff 1 2})}{\ss t},\ \ \ \vv\in (0,1], t\in (0,T].\end{split}\end{equation}
This together with \eqref{P00} and \eqref{IT1} implies that for a constant $c(\mu,\phi)>0$
 $$h_{t,1}^{\vv}(y,\nu):=\big\{v_t^{\vv,1}(y,\nu)- \hat{v}_t^{1}(y,\nu)\big\}   \ff{\{t\tt\aa(t^{\ff 1 2})\}^{\ff 1 2}}{\aa(t^{\ff 1 2})} $$ satisfies
 \beq\label{W2} \lim_{\vv\to 0}   |h_{t,1}^{\vv}(y,\nu)|=0,\ \   \sup_{\vv\in (0,1]}\sup_{y,\nu}| h_{t,1}^{\vv}(y,\nu)|\le c(\mu,\phi) \ss{\tt\aa(t^{\ff 1 2})},\ \ t\in (0,T].\end{equation}

Next, we estimate $\|v_t^{\vv,2}\|_\infty.$  Recall that $X_t^{x+\vv\phi(x),\mu}$ solves \eqref{DSo'} with initial value $x+\vv\phi(x)$. For any $ x\in\R^d,s,t\in [0,T]$, let
\beq\label{RTE} \beg{split}
&R_t^{\vv,x}:= \e^{\int_0^t\<\eta_s^{\vv,x},\d W_s\>- \ff 1 2\int_0^t |\eta_s^{\vv,x}|^2\d s},\\
&\eta_s^{\vv,x}:= \zeta_s(X_s^{x+\vv\phi(x),\mu})\\
&\qquad\quad\times\big\{b_s^{(1)}(X_s^{x+\vv\phi(x),\mu}, P_s^\ast\mu_\vv) - b_s^{(1)}(X_s^{x+\vv\phi(x),\mu}, P_s^\ast\mu)\big\}.
\end{split}\end{equation}
By \cite[Lemma 3.2]{RW212}, we have
\begin{align}\label{B-B}
\nonumber&b_t^{(1)}(y,P_t^\ast\mu_\vv)-b_t^{(1)}(y,P_t^\ast\mu)\\
&=\int_0^1\frac{\d }{\d r}b_t^{(1)}(y,(1-r)P_t^\ast\mu+rP_t^\ast\mu_\vv)\d r\\
\nonumber&=\int_0^1\int_{\R^d}D^E b_t^{(1)}(y,(1-r)P_t^\ast\mu+rP_t^\ast\mu_\vv)(z)(P_t^\ast\mu_\vv-P_t^\ast\mu)(\d z)\d r.
\end{align}
Since {\bf (B)} implies {\bf (A)}, Lemma \ref{DLP} holds so that we find constants $c_0,c>0$ such that
\beq\label{EE} \beg{split}  |\eta_s^{\vv,x}|&\le c_0\W_\alpha(P_s^\ast\mu_\vv, P_s^\ast\mu)\leq c\varepsilon\|\phi\|_{L^k(\mu)}\ff{ \aa(s^{\ff 1 2}) }{\ss s}, \ \ s\in [0,T],\ \vv\in [0,1],x\in\R^d. \end{split}\end{equation}
 Then by Girsanov's theorem, for any $x\in\R^d$,
$$ W_t^{\vv,x}:= W_t-\int_0^t\eta_s^{\vv,x} \d s,\ \ s\in [0,T]$$ is a
Brownian motion under $\Q:= R_T^{\vv,x}\P$.
Reformulate  \eqref{DSo'} with $x+\vv\phi(x)$ replacing $x$  as
 \beg{align*} \d X_t^{x+\vv\phi(x),\mu}&=\Big\{b_t^{(0)}(X_t^{x+\vv\phi(x),\mu}) + b_t^{(1)}(X_t^{x+\vv\phi(x),\mu}, P_t^*\mu) \Big\}\d t+ \si_t(X_t^{x+\vv\phi(x),\mu})\d W_t\\
&=\Big\{b_t^{(0)}(X_t^{x+\vv\phi(x),\mu}) + b_t^{(1)}(X_t^{x+\vv\phi(x),\mu}, P_t^*\mu_\vv) \Big\}\d t+ \si_t(X_t^{x+\vv\phi(x),\mu})\d W_t^{\vv,x},\\
&\qquad \ X_0^{x+\vv\phi(x),\mu}=x+\vv\phi(x),\ x\in\R^d.\end{align*}
By the weak uniqueness of \eqref{DSo'} with $\mu=\mu_\vv,$ we get
 \begin{align}\label{B00} \nonumber v_t^{\vv,2}(y,\nu)&=  \ff{P_t^{\mu_\vv}U_t(y,\nu,\cdot)(\mu_\vv)- P_t^{\mu}U_t(y,\nu,\cdot)(\mu_\vv)}\vv\\
& =\ff{\int_{\R^d}[P_t^{\mu_\vv}U_t(y,\nu,\cdot)(x+\vv\phi(x))- P_t^{\mu}U_t(y,\nu,\cdot)(x+\vv\phi(x))]\mu(\d x)}\vv\\
\nonumber &= \ff 1 \vv \int_{\R^d}\E[U_t(y,\nu,X_t^{x+\vv\phi(x),\mu})(R_t^{\vv,x}-1)]\mu(\d x),\ \ t\in [0,T].\end{align}
By  \eqref{EE}, for any $p\ge 1$ there exists  a constant $c (p,\mu,\phi)>0$ such that
\beq\label{EE'}  \E[|R_t^{\vv,x}-1|^p] \le c(p,\mu,\phi)\vv^p \bigg(\int_0^t\ff{ \aa(s^{\ff 1 2})^2 }{s}\d s\bigg)^{\ff p 2},\ \ t\in [0,T], \vv\in [0,1], x\in\R^d.\end{equation}
 Again by \eqref{aas} in Lemma \ref{LN} for $p=2$ and $z=\tilde{\theta}^{\lambda,\mu}_t(x+\vv \phi(x))$, \eqref{B00}, \eqref{EE'}, \eqref{EX}, \eqref{soi} and \eqref{AA}, we find a constant $c_1(\mu,\phi)>0$ such that
\begin{align}\label{EUP}\|v_t^{\vv,2}\|_\infty&\le c_1(\mu,\phi)\alpha(t^{\frac{1}{2}}) \bigg(\int_0^t\ff{ \aa(s^{\ff 1 2})^2 }{s}\d s\bigg)^{\ff 1 2},\ \ t\in [0,T], \vv\in (0,1].
\end{align}
This together with \eqref{W0} yields  that for some  constant $c_2(\mu,\phi)>0$,
\begin{align*}\|v_t^\vv\|_\infty^2 &\le 2 \|v_t^{\vv,1}\|_{\infty}^2 +2\|v_t^{\vv,2}\|_\infty^2\\
 &\le c_2(\mu,\phi)\bigg(\ff{\aa(t^{\ff 1 2 })^2}{ t} +\alpha(t^{\frac{1}{2}}) \int_0^t\ff{ \aa(s^{\ff 1 2})^2 }{s}\d s\bigg),\ \ t\in (0,T],\vv\in(0,1].
\end{align*}
By the definition of $\alpha$ and \eqref{ASS}, we find a constant $c_3(\mu,\phi)>0$ such that
\begin{align}\label{vtv}& \|v_t^\vv\|_\infty^2 \le c_3(\mu,\phi)\ff{\aa(t^{\ff 1 2 })^2}{ t},\   \ t\in (0,T],\ \ \vv\in (0,1]. \end{align}

(c)  Estimate on $\|v_t^{\vv,2}-\hat v_t^2\|_{\infty}$.
Similarly to (b), we have
 \beq\label{RT} \beg{split} &\ff{R_t^{\vv,x} -1}\vv=\int_0^t R_s^{\vv,x} \big\<\vv^{-1}\eta_s^{\vv,x},\d W_s\big\>\\
&= \int_0^t R_s^{\vv,x} \Big\<\ff{\zeta_s(X_s^{x+\vv\phi(x),\mu})[b_s^{(1)}(\cdot, P_s^*\mu_\vv)- b_s^{(1)}(\cdot, P_s^*\mu)](X_s^{x+\vv\phi(x),\mu})}\vv,\ \d W_s\Big\>\\
&= h_t(\vv,x)+\int_0^t \Big\<\zeta_s(X_s^{x,\mu})v_s^\vv(X_s^{x,\mu},P_s^\ast\mu),\ \d W_s\Big\>,\ \ x\in\R^d,\end{split}\end{equation}
where
\beg{align*} h_t(\vv,x):= \int_0^t \Big\<&\zeta_s(X_s^{x+\vv\phi(x),\mu})R_s^{\vv,x} \ff{[b_s^{(1)}(\cdot, P_s^*\mu_\vv)- b_s^{(1)}(\cdot, P_s^*\mu)](X_s^{x+\vv\phi(x),\mu})}\vv\\
&\qquad\qquad\quad-\zeta_s(X_s^{x,\mu})v_s^\vv(X_s^{x,\mu},P_s^\ast\mu),\ \d W_s\Big\>,\ \ x\in\R^d\end{align*}
satisfies
\beq\label{HH}\lim_{\vv\to 0} \E\Big[\sup_{t\in [0,T]} |h_t(\vv,x)|^2\Big] =0,\ \ x\in\R^d.\end{equation}
Indeed, by \eqref{B-B} and the definition of $v_s^\vv$, we have
\begin{align*}
 &\frac{[b_s^{(1)}(\cdot,P_s^\ast\mu_\vv)-b_s^{(1)}(\cdot,P_s^\ast\mu)]((X_s^{x+\vv\phi(x),\mu})}{\vv}\\
&=\frac{1}{\vv}\int_0^1\int_{\R^d}D^E b_s^{(1)}(X_s^{x+\vv\phi(x),\mu},(1-r)P_s^\ast\mu+rP_s^\ast\mu_\vv)(z)(P_s^\ast\mu_\vv-P_s^\ast\mu)(\d z)\d r\\
&=\int_0^1v_s^\vv(X_s^{x+\vv\phi(x),\mu},(1-r)P_s^\ast\mu+rP_s^\ast\mu_\vv)\d r.
\end{align*}
This together with the BDG inequality implies
\begin{align}\label{nyt}
\nonumber& \E\Big[\sup_{t\in [0,T]} |h_t(\vv,x)|^2\Big]\\
&\leq 2\int_0^T\E \Big|\zeta_s(X_s^{x+\vv\phi(x),\mu})R_s^{\vv,x} \int_0^1v_s^\vv(X_s^{x+\vv\phi(x),\mu},(1-r)P_s^\ast\mu+rP_s^\ast\mu_\vv)\d r\\
\nonumber&\qquad\qquad\quad-\zeta_s(X_s^{x,\mu})v_s^\vv(X_s^{x,\mu},P_s^\ast\mu)\Big|^2\d s.
\end{align}
By \eqref{EX}, for any $p>1$, we can find a constant $c_p>0$ such that
\beq\label{E3'} \E\Big[\sup_{t\in [0,T]} |X_t^{x+\vv\phi(x),\mu}-X_t^{x,\mu}|^p\Big]\le c_p|\phi(x)|^p\vv^p,\ \ \vv\in [0,1],\mu\in \scr P_k.\end{equation}
By the boundedness and continuity of $\zeta$ due to {\bf(B)}, $\int_0^T \ff{\aa(t^{\ff 1 2})^2}{t}  \d t<\infty$, \eqref{E3'}, \eqref{EE'}, \eqref{vtv}, \eqref{nyt},  and the dominated convergence theorem, to prove \eqref{HH}, it is sufficient to prove that for $(s,x,r)\in(0,T]\times\R^d\times[0,1]$,
\begin{align}\label{gyt}
\lim_{\vv\to0}\E\Big|v_s^\vv(X_s^{x+\vv\phi(x),\mu},(1-r)P_s^\ast\mu+rP_s^\ast\mu_\vv)- v_s^\vv(X_s^{x,\mu},P_s^\ast\mu)\Big|=0.
\end{align}
For any $(\omega,\omega')\in\Omega\times\Omega$, let
\begin{align*}&U^{1,\vv}_r(x,y,s,u,\omega,\omega') =U_s(X_s^{x+\vv\phi(x),\mu}(\omega'),(1-r)P_s^\ast\mu+rP_s^\ast\mu_\vv,X_s^{y+\vv u \phi(y),\mu}(\omega)),\\
&U^{2,\vv}_r(x,y,s,u,\omega,\omega')=U_s(X_s^{x+\vv\phi(x),\mu}(\omega'),(1-r)P_s^\ast\mu+rP_s^\ast\mu_\vv,  \tilde{\theta}^{\lambda,\mu}_s(y+\vv u \phi(y))),\\
&\tilde{U}^{1,\vv}_r(x,y,s,u,\omega,\omega')=U_s(X_s^{x,\mu}(\omega'),P_s^\ast\mu,X_s^{y+\vv u \phi(y),\mu}(\omega)),\\
&\tilde{U}^{2,\vv}_r(x,y,s,u,\omega,\omega')=U_s(X_s^{x,\mu}(\omega'),P_s^\ast\mu, \tilde{\theta}^{\lambda,\mu}_s(y+\vv u \phi(y))).
\end{align*}
Since {\bf (B)} implies {\bf(A)}, \eqref{END} holds such that
\begin{align}\label{puv}\W_k((1-r)P_s^\ast\mu+rP_s^\ast\mu_\vv,P_s^\ast\mu)\leq r\W_k(P_s^\ast\mu_\vv,P_s^\ast\mu)\leq cr\vv\|\phi\|_{L^k(\mu)}.\end{align}
 By \eqref{v1a}, 
\eqref{EX} and H\"{o}lder's inequality, we conclude that for any $\beta\in(1,k)$,
\begin{align*}&\E|v_s^{\vv,1}(X_s^{x+\vv\phi(x),\mu},(1-r)P_s^\ast\mu+rP_s^\ast\mu_\vv) -v_s^{\vv,1}(X_s^{x,\mu},P_s^\ast\mu)|\\
&\leq\int_{0}^1\int_{\R^d}\int_{\Omega\times \Omega}\bigg|\big[(U^{1,\vv}_r(x,y,s,u,\omega,\omega')-U^{2,\vv}_r(x,y,s,u,\omega,\omega'))\\
&\qquad\qquad\qquad\quad-(\tilde{U}^{1,\vv}_r(x,y,s,u,\omega,\omega')- \tilde{U}^{2,\vv}_r(x,y,s,u,\omega,\omega'))\big]\\
&\qquad\qquad\quad\times \int_0^s\ff 1 s \big\<\zeta_v(X_v^{y+\vv u \phi(y),\mu})\nn_{\phi(y)} X_v^{y+\vv u\phi(y),\mu},\d W_v\big\>\bigg|\d\P(\omega)\d \P(\omega')\mu(\d y)\d u \\
&\leq c_0\int_0^1 \int_{\R^d}\frac{1}{\sqrt{s}}|\phi(y)|\Big\{\int_{\Omega\times\Omega} \bigg|(U^{1,\vv}_r(x,y,s,u,\omega,\omega')-U^{2,\vv}_r(x,y,s,u,\omega,\omega'))\\
&\qquad\qquad\qquad\quad-(\tilde{U}^{1,\vv}_r(x,y,s,u,\omega,\omega')- \tilde{U}^{2,\vv}_r(x,y,s,u,\omega,\omega'))\bigg|^\beta\d \P(\omega)\d \P(\omega')\Big\}^{\frac{1}{\beta}}\mu(\d y)\d u.
\end{align*}
 {By \eqref{ale} for $\eta=\alpha(\xi)^{k-1}$ and $p=\frac{k}{k-1}$, we obtain $\|\alpha(\xi)\|_{L^k(\P)}\leq \alpha(\|\xi\|_{L^k(\P)})$, which together with \eqref{soi} implies}
\begin{align*}
&\int_{\Omega\times\Omega} \bigg|(U^{1,\vv}_r(x,y,s,u,\omega,\omega')-U^{2,\vv}_r(x,y,s,u,\omega,\omega'))\\
&\qquad\qquad\qquad\quad-(\tilde{U}^{1,\vv}_r(x,y,s,u,\omega,\omega')- \tilde{U}^{2,\vv}_r(x,y,s,u,\omega,\omega'))\bigg|^k\d \P(\omega)\d \P(\omega')\\
&\leq 2^{k}\E\alpha(|X_s^{y+\vv u \phi(y),\mu}- \tilde{\theta}^{\lambda,\mu}_s(y+\vv u \phi(y))|)^k\\
&\leq 2^{k}\alpha\left((\E|X_s^{y+\vv u \phi(y),\mu}- \tilde{\theta}^{\lambda,\mu}_s(y+\vv u \phi(y))|^k)^{\frac{1}{k}}\right)^k\\
&\leq c_k\alpha(\sqrt{s})^{k}
\end{align*}
for some constant $c_k>0$.
So, it follows from the fact that $D^E b_t^{(1)}(y,\nu)(z)$ is continuous in $(y,\nu,z)\in\R^d\times\scr P_k\times\R^d$ due to ($B_2$), \eqref{puv}, \eqref{E3'}, \eqref{VW} and the dominated convergence theorem that
\begin{align*}
\lim_{\vv\to0}\E\Big|v_s^{\vv,1}(X_s^{x+\vv\phi(x),\mu},(1-r)P_s^\ast\mu+rP_s^\ast\mu_\vv)\d r-v_s^{\vv,1}(X_s^{x,\mu},P_s^\ast\mu)\Big|=0.
\end{align*}
Similarly, by \eqref{B00} and \eqref{EE'}, we have
\begin{align*}
\lim_{\vv\to0}\E\Big|v_s^{\vv,2}(X_s^{x+\vv\phi(x),\mu},(1-r)P_s^\ast\mu+rP_s^\ast\mu_\vv)\d r-v_s^{\vv,2}(X_s^{x,\mu},P_s^\ast\mu)\Big|=0.
\end{align*}
Therefore,   \eqref{gyt} holds, which implies    \eqref{HH}   as explained before \eqref{gyt}.

Moreover, by \eqref{HH}, \eqref{W2}, \eqref{soi}, \eqref{vtv}, \eqref{nyt} and  the  argument leading to \eqref{IT1}, we obtain from the dominated convergence theorem that
\begin{align}\label{htv}
\lim_{\varepsilon\to0}\sup_{t\in (0,T]} \ff{\ss t}{\aa(t^{\ff 1 2})} \int_{\R^d}\|\E [U_t(\cdot,\cdot,X_t^{x,\mu})h_t(\varepsilon,x)]\|_{\infty}\mu(\d x)=0,
\end{align}
and
\begin{align}\label{cyt}
\nonumber&\lim_{\varepsilon\to0}\sup_{t\in (0,T]} \ff{\ss t}{\aa(t^{\ff 1 2})} \int_{\R^d} \bigg\|\E\bigg[U_t(\cdot,\cdot,X_t^{x,\mu})\\
&\qquad\qquad\quad\times \int_0^t
\Big\<\zeta_s(X_s^{x,\mu})\big\{[v^{\vv,1}_s-\hat v_s^1](X_s^{x,\mu},P_s^\ast\mu)\big\},  \ \d W_s\Big\>\bigg]\bigg\|_\infty\mu(\d x)=0.
\end{align}
Moreover, combining \eqref{B00} with  $[U_t(y,\nu,\cdot)]_\aa\le 1,$ and \eqref{ale} for $p=k^\ast$, we obtain
   \beg{align*} &\Big\|v_t^{\vv,2}- \int_{\R^d}\ff 1 \vv \E[U_t(\cdot,\cdot,X_t^{x,\mu})(R_t^{\vv,x}-1)]\mu(\d x)\Big\|_\infty\\
   &\le \ff 1\vv \int_{\R^d}\E[\aa(|X_t^{x,\mu}-X_t^{x+\vv\phi(x),\mu}| )|R_t^{\vv,x}-1| ] \mu(\d x)\\
 &\le \ff 1 \vv  \int_{\R^d} \left\{\Big(\E[|R_t^{\vv,x}-1|^{k^*}]\Big)^{\ff 1 {k^*}} \aa\Big(\big(\E[|X_t^{x,\mu}-X_t^{x+\vv\phi(x),\mu}|^k]\big)^{\ff 1 k}\Big)\right\}\mu(\d x).\end{align*}
This together with \eqref{E3'}   and \eqref{EE'} yields that for some constant $k_1(\mu,\phi)>0$,
$$ \Big\|v_t^{\vv,2}- \int_{\R^d}\ff 1 \vv \E[U_t(\cdot,\cdot,X_t^{x,\mu})(R_t^{\vv,x}-1)]\mu(\d x)\Big\|_\infty\le   k_1(\mu,\phi) {{\alpha( \vv)}},\ \ t\in [0,T],\ \vv\in (0,1]. $$
Combining this with \eqref{v2c}, \eqref{B00}, \eqref{HH}, \eqref{cyt}, \eqref{RT}, \eqref{htv},  and the   argument leading to \eqref{tid},
  we find a constant $k_2(\mu,\phi)$ and a measurable  function $\tt h:   (0,T] \times (0,1] \to (0,\infty)$  with
\begin{align}\label{hva}\sup_{\vv\in (0,1],t\in (0,T]} \ff{\ss t}{\aa(t^{\ff 1 2})} \tt h_t(\vv)\le k_2(\mu,\phi),\ \ \lim_{\vv\to 0} \sup_{t\in (0,T]} \ff{\ss t}{\aa(t^{\ff 1 2})} \tt h_t(\vv)=0
\end{align} such that
\beg{align}\label{mcy} \nonumber\left\|v_t^{\vv,2}-\hat v_t^2\right\|_\infty&\le   \tt h_t(\vv) +\int_{\R^d}\bigg\|\E\bigg[U_t(\cdot,\cdot,X_t^{x,\mu})\\
&\qquad\qquad\quad\times \int_0^t
\Big\<\{\zeta_s[v^{\vv,2}_s-\hat v_s^2]\}(X_s^{x,\mu},P_s^\ast\mu),  \ \d W_s\Big\>\bigg]\bigg\|_\infty\mu(\d x)\\
\nonumber &\le  \tt h_t(\vv) + k_2(\mu,\phi)\bigg(\int_0^t \|v_s^{\vv,2}-\hat v_s^2\|_\infty^2\d s\bigg)^{\frac{1}{2}},\ \ t\in (0,T].\end{align}

(d)  {Proof of \eqref{SFF}}. Let
\begin{equation*}\beg{split}  \beta_t:= \limsup_{\varepsilon\to0}\sup_{s\in (0,t]}\ff{\ss s}{ \aa(s^{\ff 1 2 })}\|v_s^{\vv,2}-\hat v_s^2\|_\infty.\end{split}\end{equation*}
Noting that    \eqref{EST0}, \eqref{W1'}, \eqref{v12}, \eqref{IT1} and \eqref{EUP} imply that $\beta_t$ satisfies
$$\sup_{t\in (0,T]} \beta_t\le \sup_{\vv\in (0,1]} \sup_{s\in (0,T]} \ff{\ss s}{\aa(s^{\ff 1 2})} \|v_s^{\vv,2}-\hat v_s^2\|_\infty =: \tt c(\mu,\phi)<\infty,$$
 {so that by Fatou's lemma in \eqref{mcy}   we derive from \eqref{hva} that}
$$ \beta_t^2  \le   C  k_2(\mu,\phi)^2    \int_0^t\ff{\aa(s^{\ff 1 2})^2}{s} \beta_s^2\d s,\ \ t\in (0,T],$$
where by \eqref{AA2},
$$C:= \sup_{t\in (0,T]} \ff{t}{\aa(t^{\ff 1 2})^2}<\infty.$$
 Combining this with $\int_0^T \ff{\aa(t^{\ff 1 2})^2}{t}  \d t<\infty$, and applying Gronwall's inequality, we prove   \eqref{SFF}, which together with \eqref{W2} completes the proof.
  \end{proof}

 We are now ready to prove Theorem \ref{TA2'}.

\beg{proof}[Proof of Theorem \ref{TA2'}]   By
  \eqref{BS0} and   \eqref{BS1'}, it suffices to prove that for any $t\in (0,T]$ and $f\in \B_{k-1,b}(\R^d)$,
\beq\label{ENN} \beg{split}  &\lim_{\vv\downarrow 0} \ff{P_t^{\mu_\vv} f(\mu_\vv)- P_t^{\mu} f(\mu_\vv)}\vv\\
&= \int_{\R^d}\E\bigg[f(X_t^{x,\mu})\int_0^t \Big\<\zeta_s (X_s^{x,\mu}) N_s(\mu,\phi),\ \d W_s\Big\>\bigg]\mu(\d x).\end{split}\end{equation}
Let $R_t^{\vv,x}$ be in \eqref{RTE}.  By \eqref{B00} for $f$ replacing $U_t(y,\nu,\cdot)$, we obtain
\beq\label{EN1} \ff{P_t^{\mu_\vv} f(\mu_\vv)- P_t^{\mu} f(\mu_\vv)}\vv=\int_{\R^d}\ff 1\vv \E\big[f(X_t^{x+\vv\phi(x),\mu})(R_t^{\vv,x}-1)\big]\mu(\d x),\ \ t\in (0,T].\end{equation}
 Noting that \eqref{E3'} implies
 $$\lim_{\vv\to 0}\E\Big[\sup_{t\in[0,T]} |X_t^{x+\vv\phi(x),\mu}-X_t^{x,\mu}|^k\Big]=0,$$
 while \eqref{HH}, \eqref{RT}, Lemma \ref{LL2}, \eqref{P00}, \eqref{W1'}, \eqref{00} and \eqref{SFF} lead to
\begin{align*}\lim_{\vv\to 0}\ff{R_t^{\vv,x}-1}\vv =  \int_0^t \Big\<\zeta_s (X_s^{x,\mu})N_s(\mu,\phi),\ \d W_s\Big\>
 \end{align*}
 in $L^2(\P)$, by taking $\vv\to 0$ in \eqref{EN1} and using the dominated convergence theorem, we deduce \eqref{ENN}  for $f\in C_b(\R^d)$. By an approximation argument as in \cite[Proof of (2.3)]{FYW3} for $f\in\scr B_b(\R^d)$, this implies  \eqref{ENN} for $f\in \B_b(\R^d).$ By the approximation argument used in the proof of \eqref{BS00}, we may further extend \eqref{ENN} to $f\in \B_{k-1,b}(\R^d).$
\end{proof}

\end{document}